\newtheorem{theorem}{Theorem}[section]
\newtheorem{lemma}{Lemma}[section]
\numberwithin{equation}{section}
\numberwithin{table}{section}
\numberwithin{figure}{section}
\title{The Discriminant of the Characteristic Polynomial of the $k$th Fibonacci sequence is not a member of the $k$th Lucas sequence}
\author{Herbert Batte$^{1,*} $ and Florian Luca$^{1}$}
\date{}
\begin{document}
\maketitle
\abstract{ Let $k\ge 2$ and $\{L_n^{(k)}\}_{n\geq 2-k}$ be the sequence of $k$-generalized Lucas numbers whose first $k$ terms are $0,\ldots,0,2,1$ and each term afterwards is the sum of the preceding $k$ terms. In this paper, we show that this sequence does not contain the discriminant of its characteristic polynomial.} 

\medskip

{\bf Keywords and phrases}: $k$-generalized Lucas numbers; linear forms in logarithms; 2-adic valuation.

\medskip

{\bf 2020 Mathematics Subject Classification}: 11B39, 11D61, 11D45.

\medskip

\thanks{$ ^{*} $ Corresponding author}

\section{Introduction}\label{intro}

The study of recurrence sequences plays a fundamental role in number theory. Among the most well-known recurrence relations are those of the Fibonacci sequence and its companion, the sequence of Lucas numbers, both of which have been extensively generalized. Given an integer $k \geq 2$, the $k$-generalized Fibonacci sequence $\{F_n^{(k)}\}_{n\in\mathbb{Z}}$, is defined by the recurrence relation  
\begin{equation*}
	F_n^{(k)} = F_{n-1}^{(k)} + F_{n-2}^{(k)} + \cdots + F_{n-k}^{(k)}, \quad \text{for all} \ n \ge 2,
\end{equation*}
with initial values \( F_{2-k}^{(k)} = \cdots = F_{-1}^{(k)} = F_{0}^{(k)} = 0 \) and $F_1^{(k)} = 1$. This sequence extends the classical Fibonacci sequence which occurs when $k=2$ and has been studied in various algebraic and combinatorial contexts. A closely related sequence is the sequence of $k$-generalized Lucas numbers $\{L_n^{(k)}\}_{n\in\mathbb{Z}}$, which follows the same recurrence  
\begin{equation*}
	L_n^{(k)} = L_{n-1}^{(k)} + L_{n-2}^{(k)} + \cdots + L_{n-k}^{(k)},\quad \text{for all} \ n \ge 2,
\end{equation*}
but with different initial conditions: $L_{2-k}^{(k)} = \cdots = L_{-1}^{(k)} = 0,~L_0^{(k)}=2,~L_1^{(k)}=1$.  The classical Lucas numbers correspond to $k=2$ and have also been extensively investigated in number theory, cryptography, and primality testing. The above $k$-generalized versions offer a rich structure for further exploration.  

Recently in \cite{luca2024}, Luca studied the discriminant of the characteristic polynomial of the $k$-generalized Fibonacci sequence, given by  
\begin{equation*}
	g_k(X) = X^k - X^{k-1} - \cdots - X - 1,
\end{equation*}
which has the formula
\begin{equation*}
	\text{Disc}\left(g_k\right) = \left(-1\right)^{\binom{k+1}{2}-1}\left(\dfrac{2^{k+1}k^k-(k+1)^{k+1}}{(k-1)^2}\right).
\end{equation*}
He showed that the absolute value $|\text{Disc}(g_k)|$ belongs to the sequence $\{F_n^{(k)}\}_{n\geq 0}$ only for $k = 2,3$. This result naturally raises the question of whether some similar property holds for the sequence of $k$-generalized Lucas numbers, since these two sequences share the same characteristic polynomial.  

In this paper, we prove the following result.

\begin{theorem}
\label{thm1.1} 
There are no integer solutions $n\ge 0$ and $k\ge 2$ to the Diophantine equation
$$
L_n^{(k)}=|{\text{\rm Disc}}(g_k)|.	
$$
\end{theorem}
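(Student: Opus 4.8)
The plan is to handle a large but bounded initial range of $k$ by direct computation and to dispatch all sufficiently large $k$ by Baker's method, using the $2$-adic valuation to organise the final case analysis. Writing $D_k=|\mathrm{Disc}(g_k)|$, I would first record that the displayed formula together with the elementary inequality $(k+1)^{k+1}<2^{k+1}k^k$ (valid for $k\ge 2$) gives $D_k=\dfrac{2^{k+1}k^k-(k+1)^{k+1}}{(k-1)^2}=2^{k+1}k^{k}(k-1)^{-2}\bigl(1+O(k2^{-k})\bigr)$. On the Lucas side I would use the dominant root $\alpha$ of $g_k$, which satisfies $2-\alpha=\alpha^{-k}$ and $2(1-2^{-k})<\alpha<2$, together with a Binet-type approximation $L_n^{(k)}=c_k\alpha^{n-1}+\vartheta_n$ with $c_k=\frac{(\alpha-1)(2\alpha-1)}{2+(k+1)(\alpha-2)}$ and $|\vartheta_n|<2$ for all $n\ge1$ (obtainable, e.g., from the identity $L_n^{(k)}=2F_{n+1}^{(k)}-F_n^{(k)}$ and the known estimates for $F_n^{(k)}$). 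Since $2-\alpha$ is exponentially small, $c_k\to\frac32$ and the crucial simplification is that, in the range that will matter, $n$ is only polynomially large in $k$, so that $\alpha^{n-1}$ may be replaced by $2^{n-1}$ with negligible relative error: precisely, $L_n^{(k)}=3\cdot 2^{n-2}\bigl(1+O(n2^{-k})\bigr)$ as long as $n\le 2^{k-1}$.

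Next I would pin down the size of $n$. Crude bounds of the shape $2^{n-2}\le L_n^{(k)}\le 2^{n+1}$ (valid while $n\le 2^{k-1}$), combined with $L_n^{(k)}=D_k$ and the estimate for $D_k$, give $n=k+1+(k-2)\log_2 k+O(\log k)$; in particular $k+2\le n\le 2^{k-1}$, which legitimises the sharp power-of-two approximation above. Substituting it into $(k-1)^2L_n^{(k)}=2^{k+1}k^k-(k+1)^{k+1}$ and dividing yields
\[
\Bigl|\,3(k-1)^2\, 2^{\,n-k-3}\,k^{-k}-1\,\Bigr|=O\bigl(k\log k\cdot 2^{-k}\bigr),
\]
so that the four-term linear form
\[
\Lambda=(n-k-3)\log 2-k\log k+2\log(k-1)+\log 3
\]
satisfies $\log|\Lambda|<-c\,k$ for an absolute $c>0$ and all large $k$. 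This is a linear form in logarithms of the fixed or slowly growing numbers $2,3,k-1,k$, with coefficients bounded by $n=O(k\log k)$; Matveev's theorem then gives $\log|\Lambda|>-C(\log k)^{3}$ for an absolute constant $C$. Comparing the two estimates forces $k<K_0$ for an explicit $K_0$.

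It then remains to eliminate $k<K_0$. I would first lower $K_0$ to a computationally trivial size by a Baker--Davenport / LLL reduction applied to $\Lambda$, and then, for each remaining $k$, verify directly that $D_k$ lies strictly between two consecutive terms of $\{L_n^{(k)}\}_{n\ge1}$ (the sequence being increasing for $n\ge 1$). The $2$-adic valuation enters to secure and shorten this final step: one computes $v_2(D_k)$ exactly from the residue of $k$ modulo $4$ — it equals $0$ for even $k$ and $k-1$ for $k\equiv3\pmod4$, with a finer analysis when $k\equiv1\pmod4$ — and compares it with $v_2(L_n^{(k)})$ over the narrow window of admissible $n$. In the odd case the forced valuation is of size about $k$, which is incompatible with the near power-of-two shape $L_n^{(k)}=3\cdot 2^{n-2}(1+\text{small})$ for $n\ge k+2$, while in the even case $D_k$ is odd and one checks the parity of $L_n^{(k)}$.

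The main obstacle is the near-degeneracy $\log\alpha\approx\log 2$: a naive linear form in $\log\alpha$ and $\log 2$ would be essentially trivial and yield nothing useful. The device that rescues the argument is exactly that $n$ is only polynomially large in $k$, which allows $\alpha$ to be discarded in favour of $2$ at the cost of an exponentially small error and leads to the clean four-term form above. Quantifying that replacement uniformly in both $n$ and $k$, controlling the Binet error term throughout the range $k+2\le n\le 2^{k-1}$, and carrying out the reduction to an effective $K_0$ are where the genuine work lies.
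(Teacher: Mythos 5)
Your opening matches the paper's strategy almost exactly: the archimedean bounds pin $n$ to an interval of length $2.4$ around $k+(k-2)\log k/\log 2$, the replacement of $\alpha^{n-1}$ by a power of $2$ is legitimate because $n<2^{k/2}$, and Matveev applied to (the exponential of) your $\Lambda$ --- the paper's $\Gamma=\left((3(k-1)^2)^{-1}2^{k-n+3}k^k\right)^{\varepsilon}-1$ --- gives $k<7\times10^{16}$. The gaps are in the endgame. First, the Baker--Davenport/LLL reduction you invoke does not apply here: $\Lambda$ involves $\log k$ and $\log(k-1)$, which vary with $k$, so there is no fixed lattice of logarithms to reduce against; and for each individual $k$ the unknown $n$ is \emph{already} confined to at most three integers by the archimedean bounds, so a reduction has nothing left to shrink. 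What must be eliminated is the range of $k$ itself, up to $7\times 10^{16}$, and a per-$k$ verification over that many values (each requiring modular computations with $k^k$ and $(k+1)^{k+1}$) is far from ``computationally trivial.''

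Second, and more fundamentally, your $2$-adic step is incorrect. Besides the minor point that $\nu_2(\Delta_k)=k-1$ for \emph{all} odd $k$ (not only $k\equiv 3\pmod 4$), the claim that a valuation of size about $k$ is ``incompatible with the near power-of-two shape $L_n^{(k)}=3\cdot 2^{n-2}(1+\text{small})$'' confuses archimedean and $2$-adic information: a real approximation says nothing about $\nu_2$ of the integer. In fact $\{L_n^{(k)}\}$ is periodic mod $2$ with period $k+1$, and writing $n=r+m(k+1)$ the paper computes, via the Cooper--Howard formula and Kummer's theorem, that for $r\ge 3$ one has $\nu_2(L_n^{(k)})=r-1+a$ with $0\le a\le 6\log k+2$; hence $\nu_2(L_n^{(k)})=k-1$ is perfectly attainable, namely when $r=k-(a-1)$, and ruling out precisely this compatible case is the bulk of the paper's work (the linear-form inequality is re-used to force $|k-2^m|<300$ with $m\in[9,55]$, leaving about $3.5\times 10^6$ triples $(a,m,k)$ checked by computer). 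Likewise in the even-$k$ case, parity alone does not finish: $L_n^{(k)}$ is odd exactly when $r\in\{1,2\}$, and such $n$ do occur in the admissible window; the paper needs the Bugeaud--Laurent $2$-adic linear-forms bound to bring $k$ below $7\times 10^7$ before a short computation closes the case. Your sketch therefore stops exactly where the genuine difficulty of the problem begins.
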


\section{Some useful formulas and inequalities involving $L_n^{(k)}$}

In this section we collect some useful formulas and inequalities concerning $k$-generalized Lucas numbers. First, it is known that the formula
\begin{align*}
	L_n^{(k)} = 3 \cdot 2^{n-2}\quad {\text{\rm holds~for}}\quad 2\le n\le k.
\end{align*}
Furthermore, 
\[
L_{k+1}^{(k)} = 3 \cdot 2^{k-1} - 2.
\]
Using induction proves easily that the inequality
\begin{equation*}
	L_n^{(k)} < 3 \cdot 2^{n-2}\qquad {\text{\rm holds~for~all}}\qquad n\ge k+1.
\end{equation*}
The $k$-generalized Lucas numbers can be expressed in terms of the $k$-generalized Fibonacci numbers via the formula
\begin{equation}
\label{eq:FL}
L_n^{(k)}=2F_{n+1}^{(k)}-F_n^{(k)}.
\end{equation}
To prove the above formula, it is enough to first notice that it holds for $n=-(k-2),\ldots,0,1,2$ and then to invoke the fact that both sequences $\{L_n^{(k)}\}_{n\in {\mathbb Z}}$ and $\{2F_{n+1}^{(k)}-F_n^{(k)}\}_{n\in {\mathbb Z}}$ 
satisfy the same $k$th order linear recurrence to conclude that the above formula holds for all $n\in {\mathbb Z}$. Another usefull formula is 
$$
L_{n}^{(k)}=2L_{n-1}^{(k)}-L_{n-(k+1)}^{(k)}
$$
for all $n\in {\mathbb Z}$ (to see why it holds, replace the left--hand side above by $L_{n-1}^{(k)}+L_{n-2}^{(k)}+\cdots+L_{n-k}^{(k)}$, cancel a term $L_{n-1}^{(k)}$ on both sides of the resulting equation, and recognise that 
the resulting formula is just the formula representing $L_{n-1}^{(k)}$ as the sum of the previous $k$ terms of the sequence).  Reducing the above formula modulo $2$ we get
$$
L_n^{(k)}\equiv L_{n-(k+1)}^{(k)} \pmod 2,
$$
showing that $\{L_n^{(k)}\}_{n\in {\mathbb Z}}$ is periodic modulo $2$ with period $k+1$. This fact is so useful that we record it as a lemma.

\begin{lemma}
\label{lem:1}
The sequence $\{L_n^{(k)}\}_{n\in {\mathbb Z}}$ is periodic modulo $2$ with period $k+1$.
\end{lemma}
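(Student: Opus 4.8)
The plan is to read off the lemma directly from the three-term recurrence
$$
L_n^{(k)} = 2L_{n-1}^{(k)} - L_{n-(k+1)}^{(k)},
$$
which is established above for every $n\in{\mathbb Z}$ and which I am entitled to assume. The single observation that does the work is that the term $2L_{n-1}^{(k)}$ is even; hence reducing the identity modulo $2$ annihilates it and leaves
$$
L_n^{(k)} \equiv L_{n-(k+1)}^{(k)} \pmod 2 .
$$

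Next I would emphasise that this congruence is valid for \emph{all} integers $n$, precisely because the underlying $k$th order linear recurrence, and therefore its telescoped three-term form, holds on all of ${\mathbb Z}$ rather than only for $n\ge 2$. Setting $a_n := L_n^{(k)} \bmod 2 \in \{0,1\}$, the congruence reads $a_n = a_{n-(k+1)}$ for every $n\in{\mathbb Z}$. This is exactly the assertion that the residue sequence $(a_n)_{n\in{\mathbb Z}}$ is invariant under the shift $n\mapsto n-(k+1)$; iterating the shift in either direction yields $a_n = a_{n+m(k+1)}$ for all $m\in{\mathbb Z}$, which is periodicity modulo $2$ with period $k+1$, as claimed.

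I do not expect any genuine obstacle here: all the substance lives in the derivation of the three-term recurrence, which is already in hand. Were one to prove the lemma entirely from scratch, the only nonroutine step would be that telescoping, namely writing $L_n^{(k)} = L_{n-1}^{(k)}+\cdots+L_{n-k}^{(k)}$, substituting the analogous expansion of $L_{n-1}^{(k)}$, and cancelling the shared block of consecutive terms to isolate $2L_{n-1}^{(k)}-L_{n-(k+1)}^{(k)}$. I would not track minimality of the period, since the statement only requires that $k+1$ \emph{be} a period; one can nonetheless note that a direct inspection of a full block of $k+1$ residues, which from the initial data equals $\underbrace{0,\ldots,0}_{k-1},1,1$, confirms that $k+1$ is in fact the minimal period, a refinement that is not needed in the sequel.
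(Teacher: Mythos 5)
Your proof is correct and follows essentially the same route as the paper: both derive the telescoped identity $L_n^{(k)}=2L_{n-1}^{(k)}-L_{n-(k+1)}^{(k)}$ valid on all of ${\mathbb Z}$ and reduce it modulo $2$ to obtain the periodicity. Your closing aside on the minimal period (the residue block $0,\ldots,0,1,1$ of length $k+1$) is a correct but unneeded refinement that the paper does not include.
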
 

Now let's recall some more formulas related to the sequence of $k$-generalized Lucas numbers. It's characteristic  polynomial
$$
g_k(X) = X^k - X^{k-1} - \cdots - X - 1,
$$
is irreducible in $\mathbb{Q}[X]$. This polynomial has exactly one real root greater than 1, denoted by $\alpha := \alpha(k)$, and all its other roots lie inside the unit circle in the complex plane. Further, this real root lies within the range
\begin{align}\label{eq2.3}
	2\left(1 - 2^{-k} \right) < \alpha < 2,\qquad {\text{\rm for~all}}\qquad k\ge 2.
\end{align}
If we need to refer to all the roots of $g_k(X)$, then we label them as $\alpha_1,\ldots,\alpha_k$ with the convention that $\alpha:=\alpha_1$. For all $n \geq 0$ and $k \geq 2$, it was proved in \cite{BRL} that the $k$-generalized Lucas numbers are bounded by powers of $\alpha$ as
\begin{align}\label{eq2.4}
	\alpha^{n-1} \leq L_n^{(k)} \leq 2\alpha^n.
\end{align}
Next, we define the function
\begin{equation*}
	f_k(x) := \frac{x - 1}{2 + (k + 1)(x - 2)},\qquad x\in {\mathbb C}\backslash \left\{2-{2}/{(k+1)}\right\}.
\end{equation*}
The Binet formula for the general term of the $k$-generalized Lucas  sequence is given by
\begin{equation*}
L_n^{(k)} = \sum_{i=1}^k (2\alpha_i - 1)f_k(\alpha_i)\alpha_i^{n - 1},\qquad {\text{\rm for~all}}\quad n\in {\mathbb Z}.
\end{equation*}
Since $\alpha_2,\ldots,\alpha_k$ are inside the unit circle one would guess that only the first term $(2\alpha_1-1)f_k(\alpha_1)\alpha_1^{n-1}$ already approximates $L_n^{(k)}$ and indeed the approximation
\begin{align}\label{lk_b}
\left|L_n^{(k)} - f_k(\alpha)(2\alpha - 1)\alpha^{n - 1}\right| < \frac{3}{2},\qquad {\text{\rm for~all}}\quad n\ge 2-k
\end{align}
appears, for example, in \cite{BRL}. This tells us that most of the size of $L_n^{(k)}$ comes from the dominant term involving the real root $\alpha$, while the other terms contribute very little. A better estimate than \eqref{lk_b} appears in Section  3.3 page 14 of \cite{bat}, but with a more restricted range of $n$ in terms of $k$. It states that 
\begin{align}\label{lk_b1}
	\left| f_k(\alpha)(2\alpha - 1)\alpha^{n - 1}-3\cdot 2^{n-2}\right| < 3\cdot 2^{n-2}\cdot \frac{36}{2^{k/2}}\qquad {\text{\rm provided}}\qquad n<2^{k/2}.
\end{align}
Another useful combinatorial formula for the $k$th generalized Fibonacci numbers is  due to Cooper and Howard in \cite{cop} and states that
\begin{align}\label{comb}
F_n^{(k)} = 2^{n - 2} + \sum_{j = 1}^{\left\lfloor \frac{n + k}{k + 1} \right\rfloor - 1} C_{n, j} \, 2^{n - j(k+1) - 2}, \quad \text{where} \quad 
C_{n,j} := (-1)^j \left( \binom{n - jk}{j} - \binom{n - jk - 2}{j - 2} \right).
\end{align}
In Equation \eqref{comb}, the assumption that $\displaystyle\binom{n}{m}=0$ if either $n<m$ or one of $n$ or $m$ is negative, holds.
Via equation \eqref{eq:FL} it yields the following formula for the $k$th generalized Lucas number:
 \begin{align}\label{2adic2}
	L_n^{(k)} &= 2F_{n+1}^{(k)}-F_n^{(k)}\nonumber\\
	&= 2\left(2^{n-1}+
	\sum_{j = 1}^{\left\lfloor \frac{n +1+ k}{k + 1} \right\rfloor - 1} C_{n+1, j} \, 2^{n - j(k+1) - 1}\right)-
	2^{n - 2} - \sum_{j = 1}^{\left\lfloor \frac{n + k}{k + 1} \right\rfloor - 1} C_{n, j} \, 2^{n - j(k+1) - 2}\nonumber\\
	&= 3\cdot2^{n-2}+
	\sum_{j = 1}^{\left\lfloor \frac{n +1+ k}{k + 1} \right\rfloor - 1}4 C_{n+1, j} \, 2^{n - j(k+1) -2}
	- \sum_{j = 1}^{\left\lfloor \frac{n + k}{k + 1} \right\rfloor - 1} C_{n, j} \, 2^{n - j(k+1) - 2}.
\end{align}

\section{The $2$-adic valuation of $L_n^{(k)}$}

Since $\{L_n^{(k)}\}_{n\in {\mathbb Z}}$ is periodic modulo $2$ with period $k+1$ (see Lemma \ref{lem:1}), it follows that the $2$-adic valuation of $L_n^{(k)}$ depends on the residue of $n$ modulo $k+1$. So, let us write 
$n=r+m(k+1)$ for some integers $m\ge 0$ and $r\in \{0,1,\ldots,k\}$. 

\begin{lemma}
\label{lem:2adic}
Let $k\ge 2$ and  $n=r+m(k+1)$ with integers $m\ge 0$ and $r\in \{0,1,\ldots,k\}$. The following congruences hold.
\begin{itemize}
\item[(i)] If $r=0$, then 
\begin{equation}
\label{eq:r=0}
L_n^{(k)}\equiv 2(-1)^m \pmod {2^{k-2}}.
\end{equation}
\item[(ii)] If $r=1$, then 
\begin{equation}
\label{eq:r=1}
L_n^{(k)}\equiv (4m+1)(-1)^m\pmod {2^{k-1}}.
\end{equation}
\item[(iii)] If $r=2$, then 
\begin{equation}
\label{eq:r=2}
L_n^{(k)}\equiv (4m^2+6m+3)(-1)^m\pmod {2^{k}}.
\end{equation}
\item[(iv)] If $r\ge 3$, then 
\begin{equation}
\label{eq:rge3}
L_n^{(k)}\equiv (-1)^m 2^{r-2} \left(4\left(\binom{m+r+1}{m}-\binom{m+r-1}{m-2}\right)-\left(\binom{m+r}{m}-\binom{m+r-2}{m-2}\right)\right)\pmod {2^{k+r-2}}.
\end{equation}
\end{itemize}
\end{lemma}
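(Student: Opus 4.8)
The plan is to extract all four congruences directly from the closed form \eqref{2adic2} by tracking the $2$-adic valuation of each summand. Writing $n=r+m(k+1)$, the generic summand $4C_{n+1,j}2^{n-j(k+1)-2}$ of the first sum carries the power $2^{r+(m-j)(k+1)}$, the generic summand $C_{n,j}2^{n-j(k+1)-2}$ of the second sum carries $2^{r-2+(m-j)(k+1)}$, and the leading term $3\cdot 2^{n-2}$ carries $2^{r-2+m(k+1)}$. A routine evaluation of the floor functions shows that the first sum runs over $1\le j\le m$, while the second runs over $1\le j\le m$ when $r\ge 1$ and over $1\le j\le m-1$ when $r=0$. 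Thus, as $j$ drops from its top value $m$, each step raises the exponent of $2$ by $k+1$.

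First I would settle the base case $m=0$, where $n=r\le k$: the sums are empty, and the initial values $L_0^{(k)}=2$, $L_1^{(k)}=1$ together with $L_n^{(k)}=3\cdot 2^{n-2}$ for $2\le n\le k$ verify (i)--(iv) immediately. For $m\ge 1$ and $k\ge 3$ the key claim is that, modulo $2^{k+r-2}$, only the top summands $j=m$ survive. Indeed every summand with $j\le m-1$, as well as the leading term, carries an exponent at least $(k+1)+(r-2)=k+r-1>k+r-2$, hence is an integer multiple of $2^{k+r-2}$ and vanishes. This leaves
\begin{equation*}
L_n^{(k)}\equiv 4C_{n+1,m}\,2^{r-2}-C_{n,m}\,2^{r-2}=2^{r-2}\bigl(4C_{n+1,m}-C_{n,m}\bigr)\pmod{2^{k+r-2}},
\end{equation*}
where the second term is present only when $r\ge 1$ (for $r=0$ the index $j=m$ lies outside the range of the second sum).

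Because $n-mk=r+m$, the two surviving binomial coefficients simplify to
\begin{equation*}
C_{n,m}=(-1)^m\left(\binom{m+r}{m}-\binom{m+r-2}{m-2}\right),\qquad C_{n+1,m}=(-1)^m\left(\binom{m+r+1}{m}-\binom{m+r-1}{m-2}\right),
\end{equation*}
and substituting these into the displayed congruence reproduces (iv) verbatim. For the low residues I would then simplify the binomials explicitly: $r=0$ gives $C_{n+1,m}=2(-1)^m$ and hence (i); $r=1$ gives $C_{n+1,m}=(-1)^m(2m+1)$ and $C_{n,m}=2(-1)^m$, whence $\tfrac12\bigl(4C_{n+1,m}-C_{n,m}\bigr)=(4m+1)(-1)^m$, which is (ii); and $r=2$ gives $C_{n+1,m}=(-1)^m(m+1)^2$ and $C_{n,m}=(-1)^m(2m+1)$, so that $4C_{n+1,m}-C_{n,m}=(4m^2+6m+3)(-1)^m$, which is (iii).

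The obstacle here is bookkeeping rather than depth. One must pin down the summation ranges exactly, since the $r=0$ versus $r\ge 1$ split in the second sum is precisely what lowers the modulus to $2^{k-2}$ in (i). One must also check that the surviving combination is integral when $r\in\{0,1\}$, so that the formal factor $2^{r-2}$ causes no trouble; this is automatic, because the discarded summands are integers and $L_n^{(k)}$ is an integer, forcing $2^{r-2}\bigl(4C_{n+1,m}-C_{n,m}\bigr)$ to be an integer as well (concretely, $C_{n,m}$ is even in these cases). The sole genuine exception is $k=2$, where the inequality $r<k+r-2$ used to retain the top summand of the first sum fails; there only $r\in\{0,1,2\}$ occur, and (i)--(iii), with moduli $1,2,4$, are verified directly from the period-$(k+1)$ pattern of the classical Lucas numbers guaranteed by Lemma \ref{lem:1}.
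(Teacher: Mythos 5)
Your proposal is correct and matches the paper's own proof in all essentials: both extract the congruences from formula \eqref{2adic2} by evaluating the two floor functions (first sum up to $j=m$, second up to $j=m$ for $r\ge 1$ and $j=m-1$ for $r=0$), discarding every summand except the $j=m$ terms modulo $2^{k+r-2}$, and simplifying $2^{r-2}\bigl(4C_{n+1,m}-C_{n,m}\bigr)$, where your closed-form evaluations $C_{n,m}=(-1)^m(2m+1)$, $C_{n+1,m}=(-1)^m(m+1)^2$, etc.\ agree with the paper's case-by-case binomial computations. Your separate carve-out for $k=2$ is harmless but unnecessary, since the derivation never needs $r<k+r-2$, only that every discarded summand has $2$-adic exponent at least $k+r-1$, so the general argument already covers $k=2$ (and note that Lemma \ref{lem:1} by itself would not justify the mod-$4$ verification of (iii) there, as it only asserts periodicity modulo $2$).
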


\begin{proof}
Note that 
$$
\left\lfloor \frac{n+1+k}{k+1}\right\rfloor=\left\lfloor \frac{r+(m+1)(k+1)}{k+1}\right\rfloor=m+1,
$$
while
$$
\left\lfloor \frac{n+k}{k+1}\right\rfloor=\left\lfloor \frac{(r+k)+m(k+1)}{k+1}\right\rfloor=\left\{\begin{matrix} m+1 & {\text{\rm if}} & r\ge 1,\\ m & {\text{\rm if}} & r=0. \end{matrix}\right.
$$
Furthermore,  for $j$ in the summation ranges in \eqref{2adic2}, we have 
$$
n-j(k+1)-2=(r-2)+(m-j)(k+1)\ge k+r-2\qquad {\text{\rm except~when}}\qquad j=m.
$$
The case $j=m$ happens in the last term of the first summation in \eqref{2adic2} and also in the last term of the second summation but only when $r\ge 1$. So, when $r=0$, assuming also that 
$m\ge 1$ (otherwise $n=0$ so $L_n^{(k)}=L_0^{(k)}=2$ and congruence \eqref{eq:r=0} holds), we have that 
\begin{eqnarray*}
L_n^{(k)} & \equiv &  2^{0-2}\cdot 4 C_{n+1,m} \pmod {2^{k+0-2}}\equiv (-1)^m \left(\binom{n+1-mk}{m}-\binom{n+1-mk-2}{m-2}\right)\pmod {2^{k-2}}\\
& \equiv &  (-1)^m \left(\binom{1+m(k+1)-mk}{m}-\binom{1+m(k+1)-mk-2}{m-2}\right)\pmod {2^{k-2}}\\
& \equiv &  (-1)^m \left(\binom{m+1}{m}-\binom{m-1}{m-2}\right) \equiv  2(-1)^m \pmod {2^{k-2}}.
\end{eqnarray*}
This proves (i). From now on, $r\ge 1$ so both sums appearing in \eqref{2adic2} have the same length $m$. It follows that 
$$
L_n^{(k)}\equiv 2^{r-2}\left(4C_{n+1,m}-C_{n,m}\right)\pmod {2^{k+r-2}}.
$$
We treat the cases $r=1,~r=2$ and $r\ge 3$ separately. For $r=1$, if $m=0$ then $L_n^{(k)}=L_1^{(k)}=1$ and congruence \eqref{eq:r=1} holds. For $m\ge 1$, we get
\begin{eqnarray*}
L_n^{(k)} & \equiv & 2(-1)^m \left(\binom{n+1-mk}{m}-\binom{n-1-mk}{m}\right)-2^{-1}(-1)^m \left(\binom{n-mk}{m}-\binom{n-2-mk}{m-2}\right)\pmod {2^{k-1}}\\
& \equiv &  
2(-1)^m \left(\binom{2+m(k+1)-mk}{m}-\binom{m(k+1)-mk}{m-2}\right)\\ 
& -& 2^{-1}(-1)^m \left(\binom{1+m(k+1)-mk}{m}-\binom{-1+m(k+1)-mk}{m-2}\right)\pmod {2^{k-1}}\\
& \equiv &  
2(-1)^m \left(\binom{m+2}{m}-\binom{m}{m-2}\right)-2^{-1}(-1)^m\left(\binom{m+1}{m}-\binom{m-1}{m-2}\right)\pmod {2^{k-1}}.\\
\end{eqnarray*}
When $m=1$, we get 
$$
-2\binom{3}{1}+2^{-1}\binom{2}{1}\equiv -5\pmod {2^{k-1}}\equiv (-1)^1(4\cdot 1+1)\pmod {2^{k-1}},
$$
while when $m\ge 2$ we get
\begin{eqnarray*}
L_n^{(k)} & \equiv &  2(-1)^m \left(\binom{m+2}{2}-\binom{m}{2}\right)-2^{-1}(-1)^m\left(\binom{m+1}{1}-\binom{m-1}{1}\right)\pmod {2^{k-1}}\\
& \equiv &   2(-1)^m \left(\frac{(m+2)(m+1)}{2}-\frac{m(m-1)}{2}\right)-(-1)^m\pmod {2^{k-1}}\equiv (-1)^m(4m+1)\pmod {2^{k-1}}.
\end{eqnarray*}
This takes care of the congruence \eqref{eq:r=1} in (ii). For (iii), let $r=2$. When $m=0$, we have $n=2$, so $L_n^{(k)}=3=(-1)^0(4\cdot 0^2+6\cdot 0+3)$. When $m\ge 1$, we get
\begin{eqnarray*}
L_n^{(k)} & \equiv & 2^2(-1)^m \left(\binom{n+1-mk}{m}-\binom{n-1-mk}{m}\right)-2^{-1}(-1)^m \left(\binom{n-mk}{m}-\binom{n-2-mk}{m-2}\right)\pmod {2^{k}}\\
& \equiv &  
2(-1)^m \left(\binom{3+m(k+1)-mk}{m}-\binom{1+m(k+1)-mk}{m-2}\right)\\ 
& -& (-1)^m \left(\binom{2+m(k+1)-mk}{m}-\binom{m(k+1)-mk}{m-2}\right)\pmod {2^{k}}\\
& \equiv &  
4(-1)^m \left(\binom{m+3}{m}-\binom{m+1}{m-2}\right)-(-1)^m\left(\binom{m+2}{m}-\binom{m}{m-2}\right)\pmod {2^{k}}.\\
\end{eqnarray*}
When $m=1$, we get 
$$
-4\binom{4}{1}+\binom{3}{1}\equiv -13\pmod {2^{k-1}}\equiv (-1)^1(4\cdot 1^2+6\cdot 1+3)\pmod {2^{k}},
$$
while when $m\ge 2$ we get
\begin{eqnarray*}
L_n^{(k)} & \equiv &  4(-1)^m \left(\binom{m+3}{3}-\binom{m+1}{3}\right)-(-1)^m\left(\binom{m+2}{2}-\binom{m}{2}\right)\pmod {2^{k}}\\
& \equiv &  
(-1)^m(4m^2+6m+3)\pmod {2^{k}}.
\end{eqnarray*}
Part (iv) is similar, that is, let $r\ge 3$ and $m\ge 0$. Then \eqref{2adic2} gives
\begin{eqnarray*}
	L_n^{(k)} & = & 3\cdot2^{n-2}+
	\sum_{j = 1}^{m}4 C_{n+1, j} \, 2^{n - j(k+1) -2}
	- \sum_{j = 1}^{m} C_{n, j} \, 2^{n - j(k+1) - 2}\\
	& = & 3\cdot2^{n-2}+
	\sum_{j = 1}^{m}\left(4 C_{n+1, j}-C_{n, j}\right) \, 2^{n - j(k+1) -2}\\
	&\equiv & (-1)^m 2^{r-2} \left(4\left(\binom{m+r+1}{m}-\binom{m+r-1}{m-2}\right)-\left(\binom{m+r}{m}-\binom{m+r-2}{m-2}\right)\right)\pmod {2^{k+r-2}}.
\end{eqnarray*}
\end{proof}

\section{The $2$-adic valuation of ${\text{\rm Disc}}(g_k(X))$}

The following result appears in \cite{luca2024}.

\begin{lemma}
\label{lem:disc}
Let $k\ge 2$ and put 
$$
\Delta_k:=\frac{2^{k+1}k^k-(k+1)^{k+1}}{(k-1)^2}.
$$
Then 
$$
\nu_2(\Delta_k)=\left\{ \begin{matrix} 0 & {\text{\rm if}} & k\equiv 0\pmod 2,\\
k-1 & {\text{\rm if}} & k\equiv 1\pmod 2.\end{matrix}\right.
$$
\end{lemma}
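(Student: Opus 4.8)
The plan is to reduce the computation of $\nu_2(\Delta_k)$ to that of the numerator $N_k:=2^{k+1}k^k-(k+1)^{k+1}$ via the elementary valuation identity $\nu_2(\Delta_k)=\nu_2(N_k)-2\nu_2(k-1)$, and then to determine $\nu_2(N_k)$ by comparing the $2$-adic sizes of the two summands $2^{k+1}k^k$ and $(k+1)^{k+1}$. The whole argument naturally splits according to the residue of $k$ modulo $4$, since this controls both $\nu_2(k-1)$ and $\nu_2(k+1)$.

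First I would dispose of the even case. If $k$ is even then $k-1$ is odd, so $\nu_2((k-1)^2)=0$, while $(k+1)^{k+1}$ is odd and $2^{k+1}k^k$ is even; hence $N_k$ is odd, $\nu_2(N_k)=0$, and therefore $\nu_2(\Delta_k)=0$, as claimed.

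Next, assume $k$ is odd, so $k^k$ is odd and $\nu_2(2^{k+1}k^k)=k+1$. If $k\equiv 3\pmod 4$ then $\nu_2(k+1)\ge 2$, so $\nu_2((k+1)^{k+1})=(k+1)\nu_2(k+1)\ge 2(k+1)>k+1$; the two summands of $N_k$ then have distinct valuations, whence $\nu_2(N_k)=k+1$. Since $\nu_2(k-1)=1$ in this case, we get $\nu_2(\Delta_k)=(k+1)-2=k-1$.

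The main obstacle is the remaining case $k\equiv 1\pmod 4$, where both summands of $N_k$ have the same valuation $k+1$ and one must detect the exact order of cancellation. Here I would write $\nu_2(k-1)=u\ge 2$ and factor $N_k=2^{k+1}\bigl(k^k-\bigl(\tfrac{k+1}{2}\bigr)^{k+1}\bigr)$, using that $\tfrac{k+1}{2}$ is an integer. Setting $m=\tfrac{k-1}{2}$ (so $k=2m+1$, $\tfrac{k+1}{2}=m+1$, and $\nu_2(m)=u-1=:v\ge 1$), the bracket becomes $B=(2m+1)^{2m+1}-(m+1)^{2m+2}$. Expanding both powers by the binomial theorem and discarding every term of $2$-adic valuation $\ge 2v+1$, one finds $(2m+1)^{2m+1}\equiv 1+2m$ and $(m+1)^{2m+2}\equiv 1+2m+2^{2v}\pmod{2^{2v+1}}$, the surviving contributions coming only from the constant and linear terms together with the single quadratic term $\binom{2m+2}{2}m^2=(m+1)(2m+1)m^2$, which carries exactly the factor $2^{2v}$. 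Hence $B\equiv -2^{2v}\pmod{2^{2v+1}}$, so $\nu_2(B)=2v=2u-2$, giving $\nu_2(N_k)=(k+1)+(2u-2)=k-1+2u$ and finally $\nu_2(\Delta_k)=(k-1+2u)-2u=k-1$. The delicate point throughout is the bookkeeping of which binomial terms survive modulo $2^{2v+1}$, and I expect that to be the only step requiring genuine care; I would guard against slips by checking a small value such as $k=5$, where $N_5=153344=2^8\cdot 599$ and indeed $\nu_2(\Delta_5)=8-2\nu_2(4)=4=k-1$.
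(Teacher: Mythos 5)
Your proof is correct. Note, however, that the paper itself contains no proof of this lemma: it is stated as a quotation from \cite{luca2024}, so there is no in-paper argument to compare yours against. What you have produced is a correct, self-contained verification that could replace the external citation. Your even case and the case $k\equiv 3\pmod 4$ are immediate comparisons of valuations ($\nu_2(2^{k+1}k^k)=k+1$ versus $\nu_2((k+1)^{k+1})$, which is $0$ or $\ge 2(k+1)$ respectively), and the only genuinely delicate case is $k\equiv 1\pmod 4$, where both summands have valuation $k+1$ and you must measure the cancellation. I checked your bookkeeping there and it is sound: with $m=(k-1)/2$ and $v:=\nu_2(m)=\nu_2(k-1)-1\ge 1$, the expansion of $(2m+1)^{2m+1}$ has its $j=1$ term $2m(2m+1)\equiv 2m$ and all $j\ge 2$ terms of valuation $\ge j(v+1)\ge 2v+2$, giving $(2m+1)^{2m+1}\equiv 1+2m\pmod{2^{2v+1}}$; in $(m+1)^{2m+2}$ the $j=1$ term $2m(m+1)=2m+2m^2$ contributes $2m$ because $\nu_2(2m^2)=2v+1$, the $j=2$ term $(m+1)(2m+1)m^2$ has valuation exactly $2v$ (its cofactors are odd since $m$ is even), the $j=3$ term has valuation $1+4v>2v+1$, and terms with $j\ge 4$ have valuation $\ge 4v\ge 2v+2$; hence $B\equiv -2^{2v}\pmod{2^{2v+1}}$, $\nu_2(N_k)=k+1+2v=k-1+2\nu_2(k-1)$, and $\nu_2(\Delta_k)=k-1$ exactly as claimed. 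Your sanity check at $k=5$ is also accurate ($N_5=153344=2^8\cdot 599$). The one presentational remark I would make is that you should justify in one line that $\Delta_k$ is an integer (it is, being $\pm\operatorname{Disc}(g_k)$), so that the identity $\nu_2(\Delta_k)=\nu_2(N_k)-2\nu_2(k-1)$ is the valuation of an integer rather than merely of a rational number; this costs nothing but makes the reduction airtight.
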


\section{Linear forms in logarithms}
To estimate how small a nonzero linear combination of logarithms of algebraic numbers can be, we apply a result known as a Baker-type lower bound. While several such bounds are known in the literature, we make use of a version due to Matveev, as presented in \cite{MAT}. Since in our application we only work with forms in logarithms of positive integers, we bypass the regular prerequisites concerning heights and 
present the inequality that interests us.

\begin{theorem}[Matveev, \cite{MAT}]
	\label{thm:Mat} 
	Let $\gamma_1,\ldots,\gamma_t$ be positive integers larger than $1$. Put $\Gamma:=\gamma_1^{b_1}\cdots \gamma_t^{b_t}-1$ and assume $\Gamma\ne 0$. Then 
	$$
	\log |\Gamma|>-1.4\cdot 30^{t+3}\cdot t^{4.5} (1+\log B)A_1\cdots A_t,
	$$
	where $B\ge \max\{|b_1|,\ldots,|b_t|\}$ and $A_i:=\log \gamma_i$ for $i=1,\ldots,t$.
\end{theorem}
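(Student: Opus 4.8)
\section*{Proof proposal for Theorem \ref{thm:Mat}}

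The statement is the special case, with the $\gamma_i$ rational integers, of the Gelfond--Baker theory of linear forms in logarithms, so the plan is to run the Gelfond--Baker method while tracking very carefully the dependence on the number $t$ of logarithms, which is exactly where Matveev's sharpening lives. First I would reduce the bound on $\Gamma=\gamma_1^{b_1}\cdots\gamma_t^{b_t}-1$ to a bound on the linear form $\Lambda:=b_1\log\gamma_1+\cdots+b_t\log\gamma_t$ itself. Since the $\gamma_i$ are real and larger than $1$, we have $\Gamma=e^{\Lambda}-1$, and $\Gamma\ne0$ forces $\Lambda\ne0$; moreover if $|\Lambda|\ge 1$ the asserted inequality is trivial, so I may assume $|\Lambda|$ small, in which case $|\Gamma|\ge\tfrac12|\Lambda|$. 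Thus everything comes down to a lower bound for $|\Lambda|$, and the integrality of the $\gamma_i$ means the relevant heights are simply the quantities $A_i=\log\gamma_i$ while the ground field is $\mathbb{Q}$, which removes the degree factor that burdens the general algebraic case.

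The heart of the argument is an auxiliary-function construction. I would introduce the entire function of one complex variable
$$
\Phi(z)=\sum_{\lambda_1=0}^{L}\cdots\sum_{\lambda_t=0}^{L}p(\lambda_1,\ldots,\lambda_t)\,\gamma_1^{\lambda_1 z}\cdots\gamma_t^{\lambda_t z},
$$
with integer coefficients $p(\lambda)$ to be chosen, and impose that $\Phi$ together with many of its derivatives vanish at the integers $z=0,1,\ldots,S$. Writing $\gamma_i^{\lambda_i z}=e^{\lambda_i z\log\gamma_i}$ and using the relation $\log\gamma_t=(\Lambda-\sum_{i<t}b_i\log\gamma_i)/b_t$ to eliminate one logarithm, these vanishing conditions become a homogeneous linear system in the $p(\lambda)$ with controlled coefficients. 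When the number of unknowns exceeds the number of equations, Siegel's lemma---or, in the sharper variant, Laurent's interpolation-determinant device, which dispenses with Siegel's lemma and yields better constants---produces a nonzero choice of $p(\lambda)$ of controlled size. The crucial point is that, because $\Lambda$ is tiny, the actual values $\Phi^{(\tau)}(s)$ at these nodes are extremely small, not merely bounded.

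Next comes the extrapolation step, which propagates this smallness. Having arranged that the derivatives of $\Phi$ are small on an initial block of nodes, I would apply the maximum-modulus principle on a large disc to bound $\Phi$ globally, and then deduce that $\Phi^{(\tau)}(s)$ stays small over a far larger range of integers $s$ and orders $\tau$ than originally imposed. The hard part will be the companion \emph{zero (multiplicity) estimate}: one must show that the nonzero auxiliary polynomial cannot vanish to such high order at so many points of the finitely generated multiplicative group spanned by $\gamma_1,\ldots,\gamma_t$ unless a genuine multiplicative relation among the $\gamma_i$ is forced. This is the most delicate ingredient, and it is precisely here that Matveev's careful bookkeeping---an effective multiplicity estimate with sharp dependence on the number of variables---produces the economical factor $30^{t+3}t^{4.5}$ rather than the exponentially worse constants of the classical treatment, where a naive application of earlier zero estimates gives a much weaker bound.

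Finally I would close the loop with a Liouville-type inequality. Because the $\gamma_i$ are integers, any nonzero value $\Phi^{(\tau)}(s)$---a rational number whose denominator is under control---is bounded below in absolute value by a quantity depending only on its height. The clash between this Liouville lower bound and the analytic upper bound coming from the extrapolation forces either an outright contradiction or the conclusion that $|\Lambda|$ cannot be smaller than the right-hand side of the asserted inequality. Optimizing the free parameters $L$, $S$, the derivative order, and the disc radius against one another yields the explicit constant, and unwinding the reduction from $\Lambda$ back to $\Gamma$ completes the proof.
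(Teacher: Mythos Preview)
The paper does not prove Theorem~\ref{thm:Mat} at all: it is quoted verbatim as a known result from Matveev~\cite{MAT} and used as a black box in Section~6.2 to bound $k$ and $n$. No argument is given, nor is one expected---this is a deep theorem in transcendental number theory whose full proof occupies dozens of pages.

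Your proposal is a reasonable high-level outline of the Gelfond--Baker method and of where Matveev's improvement enters (the zero estimate with sharp dependence on $t$), but it is not a proof, and more to the point it is not something this paper asks you to supply. If your task is to match the paper, the correct ``proof'' here is simply a citation: \emph{See Matveev~\cite{MAT}.} If instead you genuinely intend to prove the theorem from scratch, be aware that the sketch you give elides the hardest part---the explicit multiplicity estimate with the constant $30^{t+3}t^{4.5}$---and turning that paragraph into a rigorous argument is a substantial research-level undertaking, not something that can be completed within the scope of this paper.
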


We also need a result due to Bugeaud and Laurent \cite[Th\'eor\`eme 3]{BL}):

\begin{lemma} [Bugeaud and Laurent,  \cite{BL}] \label{lem:BL}
Let $b_1,b_2$ be positive integers and suppose that $\alpha_1$ and $\alpha_2$ are multiplicatively independent positive integers. Put
$\log A_i\ge \max\{\log |\alpha_i|,\log p\}$  for $i=1,2$, and
\begin{equation*}\label{eq:bprime}
b':=\frac{b_1}{\log{A_2}}+\frac{b_2}{\log{A_1}}.
\end{equation*}
Then we have
\begin{equation*}\label{eq:lowpadicbound}
\nu_p(\alpha_1^{b_1}-\alpha_2^{b_2})\leq \frac{24 p g}{(p-1)(\log
p)^4}\,B^2\,\log{A_1}\,\log{A_2} ,
\end{equation*}
where $g$ is the smallest positive integer for which $\alpha_i^g\equiv 1\pmod p$
and
\begin{equation*}\label{eq:B}
B:=\max\left\{\log b'+\log\log p+0.4,10,10\log p\right\}.
\end{equation*}
\end{lemma}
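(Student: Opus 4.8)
The result to be proved, Lemma \ref{lem:BL}, is a lower bound for a $p$-adic linear form in two logarithms; it is quoted from \cite{BL}, and I would prove it by the interpolation determinant method of Laurent, transplanted to the non-Archimedean setting. Throughout, note that for the order $g$ to be defined one needs $p\nmid\alpha_1\alpha_2$, so $\alpha_1,\alpha_2\in\mathbb{Z}_p^{\times}$.

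The first step is to reduce $\nu_p(\alpha_1^{b_1}-\alpha_2^{b_2})$ to the valuation of an honest $p$-adic linear form. Writing
$$
\alpha_1^{b_1}-\alpha_2^{b_2}=\alpha_2^{b_2}\big(\alpha_1^{b_1}\alpha_2^{-b_2}-1\big)
$$
and using that $\alpha_2$ is a $p$-adic unit gives $\nu_p(\alpha_1^{b_1}-\alpha_2^{b_2})=\nu_p(\alpha_1^{b_1}\alpha_2^{-b_2}-1)$. If this is $0$ there is nothing to prove, so assume it is at least $1$; then $\alpha_1^{b_1}\alpha_2^{-b_2}\equiv 1\pmod p$, the Iwasawa $p$-adic logarithm converges on it, and
$$
\nu_p(\alpha_1^{b_1}-\alpha_2^{b_2})=\nu_p\big(\log_p(\alpha_1^{b_1}\alpha_2^{-b_2})\big)=\nu_p(\Lambda),\qquad \Lambda:=b_1\log_p\alpha_1-b_2\log_p\alpha_2,
$$
with the usual harmless correction when $p=2$. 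Here $\log_p\alpha_i:=g^{-1}\log_p(\alpha_i^{g})$ is well defined because $\alpha_i^{g}\in 1+p\mathbb{Z}_p$; this passage to $g$-th powers is exactly where the factor $g$ in the final constant is born. It therefore suffices to bound $\nu_p(\Lambda)$ from above.

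Next I would set up the interpolation determinant. Fix a two–dimensional box of exponent pairs $(r_1,r_2)$ with $0\le r_1\le L_1$, $0\le r_2\le L_2$, together with a number $T$ of interpolation points (parameters to be optimized), and let $D$ be the resulting common dimension. Consider the $D\times D$ matrix whose entries are the values, and divided derivatives at integer points $\tau$, of the $p$-adic analytic functions
$$
z\mapsto \alpha_1^{r_1 z}\alpha_2^{r_2 z}=\exp_p\!\big(z(r_1\log_p\alpha_1+r_2\log_p\alpha_2)\big),
$$
and let $\mathcal{D}$ be its determinant. The whole argument is a squeeze between an arithmetic lower bound and an analytic upper bound on $|\mathcal{D}|_p$. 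On the arithmetic side, after clearing the denominators from the divided derivatives, $\mathcal{D}$ is a rational integer; its nonvanishing is the \emph{zero estimate}, which holds because the frequencies $r_1\log_p\alpha_1+r_2\log_p\alpha_2$ are pairwise distinct precisely when $\alpha_1,\alpha_2$ are multiplicatively independent — the only place that hypothesis is used. A nonzero integer satisfies $\nu_p(\mathcal{D})\le\log|\mathcal{D}|/\log p$, and $|\mathcal{D}|$ is bounded explicitly in terms of $\alpha_1,\alpha_2,L_1,L_2,T$, yielding an upper bound for $\nu_p(\mathcal{D})$. On the analytic side, the assumption that $\nu_p(\Lambda)$ is large means that, modulo a high power of $p$, the logarithms $\log_p\alpha_1,\log_p\alpha_2$ become proportional with ratio $b_2:b_1$, so the frequencies collapse onto the one–parameter family indexed by $r_1b_2+r_2b_1$; this makes many columns $p$-adically nearly equal, and the non-Archimedean Schwarz lemma (the strong maximum principle for $p$-adic power series) then forces a lower bound $\nu_p(\mathcal{D})\ge c(L_1,L_2,T)\,\nu_p(\Lambda)$. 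Comparing the two inequalities and cancelling the common growth factor gives $\nu_p(\Lambda)\le \mathcal{C}(L_1,L_2,T)\,\log A_1\,\log A_2$, and a final optimization of the parameters (whose near-optimal sizes are governed by $b'$, whence the $B^2$ in the statement) produces the clean constant $24pg/\big((p-1)(\log p)^4\big)$.

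I expect the main obstacle to be the quantitative non-Archimedean analytic estimate: controlling the radius of convergence of the relevant $\exp_p$ and pinning down the exact gain in $\nu_p(\mathcal{D})$ coming from the collapse of the columns, which is precisely where the factors $p/(p-1)$ and $(\log p)^{-4}$ arise, and then carrying out the parameter optimization cleanly enough to land on the stated constant rather than a weaker one. By contrast, the zero estimate is comparatively routine once multiplicative independence is invoked, and the reduction to $\nu_p(\Lambda)$ is elementary.
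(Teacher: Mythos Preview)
The paper does not prove Lemma~\ref{lem:BL}; it is quoted verbatim as a special case of Th\'eor\`eme~3 of Bugeaud and Laurent \cite{BL} and used as a black box in Section~6.4. So there is no ``paper's own proof'' to compare your proposal against.

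That said, your sketch is an accurate high-level description of the method actually used in \cite{BL}: the reduction from $\nu_p(\alpha_1^{b_1}-\alpha_2^{b_2})$ to $\nu_p(\Lambda)$ via the Iwasawa logarithm (with the passage through $g$-th powers), the interpolation determinant built from the functions $z\mapsto\alpha_1^{r_1 z}\alpha_2^{r_2 z}$, the zero estimate hinging on multiplicative independence, the non-Archimedean Schwarz lemma for the analytic upper bound, and the final optimization in $L_1,L_2,T$ that produces the $B^2$ and the explicit constant. Your identification of the hard step --- the quantitative $p$-adic analytic estimate and the parameter optimization --- is also correct. What you have written is a coherent plan rather than a proof; filling in the exact constants, the precise form of the Schwarz lemma, and the bookkeeping that lands on $24pg/((p-1)(\log p)^4)$ is precisely the content of \cite{BL}.
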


Python is used to perform all computations in this work.

\section{The proof of the main theorem}

\subsection{Bounding $n$ in terms of $k$}

We look at the Diophantine equation
\begin{align}\label{main}
	L_n^{(k)} = \dfrac{2^{k+1}k^k-(k+1)^{k+1}}{(k-1)^2},
\end{align}
and assume that $n\ge 0$ and $k\ge 2$. If we combine \eqref{eq2.4} and \eqref{main}, then
\begin{align}\label{nbd1}
	\alpha^{n-1} \le L_n^{(k)} = \dfrac{2^{k+1}k^k-(k+1)^{k+1}}{(k-1)^2} < \dfrac{2^{k+1}k^k}{(k-1)^2}.
\end{align}
Since $\alpha > 1.6$ for all $k\ge 2$ (by \eqref{eq2.3}), then inequality \eqref{nbd1} tells us that whenever $k\le 200$, we have that $n\le 2529$. We wrote a simple program to check for solutions to \eqref{main} in the ranges $2\le k\le 200$ and $0\le n\le 2529$, and found none. From now on, we assume $k>200$.

Taking logarithms on both sides of \eqref{nbd1}, we get
\begin{align*}
	n - 1 
	&< \frac{(k+1)\log 2 + k\log k - 2\log(k-1)}{\log \alpha}\\
	&< \frac{(k+1)\log 2 + k\log k - 2\log(k-1)}{\log 2 + \log(1 - 2^{-k})},\qquad\text{by \eqref{eq2.3},} \\
	&< \frac{(k+1)\log 2 + k\log k - 2\log(k-1)}{\log 2}\left(\dfrac{1}{1-1/\left(2^{k-1}\log 2\right)}\right),
\end{align*}
where we have used the fact that $\log(1 - 2^{-k}) > -2^{1-k}$, which is valid since $0< 2^{-k} <1/2$ for all $k>200$. This further simplifies as
\begin{align*}
	n - 1 
	&< \frac{(k+1)\log 2 + k\log k - 2\log(k-1)}{\log 2}\left(1+\dfrac{1}{2^{k-2}\log 2}\right), 
\end{align*}
where the inequality $1/(1 - x) < 1 + 2x$, is valid for all $0<x<0.5$, with $x := 1/((2^{k-1} - 1)\log 2)$. Continuing in this way, we have
\begin{align*}
	n - 1 
	&< k + 1 + \frac{k \log k - 2\log(k-1)}{\log 2} 
	+ \frac{(k+1)\log 2 + k\log k}{2^{k-2} (\log 2)^2} \nonumber \\
	&< k + 1+ \frac{k \log k - 2\log(k-1)}{\log 2}+0.1,
\end{align*}
where we have used the fact that
\[
\frac{(k+1)\log 2 + k\log k}{2^{k-2} (\log 2)^2} < 0.1,
\]
for $k>200$. Since also
$$
\log(k-1)=\log k+\log(1-1/k)>\log k-2/k,
$$
we get that 
$$
n<k+2+\frac{(k-1)\log 2}{\log k}{\log 2}+0.1+\frac{2}{k(\log 2)}<k+\frac{(k-1)\log k}{\log 2}+2.3,
$$
since $k>200$. Next, we find a similar lower bound for $n$. We again combine \eqref{eq2.3}, \eqref{eq2.4} and \eqref{main} to have
\begin{align*}
	2^{n+1} > 2\alpha^{n} \ge L_n^{(k)} 
	= \dfrac{2^{k+1}k^k-(k+1)^{k+1}}{(k-1)^2}
	= \dfrac{2^{k+1}k^k}{(k-1)^2}\left(1-\dfrac{(k+1)^{k+1}}{2^{k+1}k^k(k-1)^2} \right).
\end{align*}
This means that
\begin{align*}
	2^{n+1} >  \dfrac{2^{k+1}k^k}{(k-1)^2}\left(1-\dfrac{(k+1)^{k+1}}{2^{k+1}k^k(k-1)^2} \right)
	>\dfrac{2^{k+1}k^k}{(k-1)^2}\left(1-\dfrac{4(k+1)}{2^{k+1}(k-1)^2} \right),
\end{align*}
since $(1 + 1/k)^k < 4$ holds for $k \geq 2$. Therefore, we have
\begin{align*}
	2^{n+1} 
	&>\dfrac{2^{k+1}k^k}{(k-1)^2}\left(1-\dfrac{1}{2^{k-1}} \right),
\end{align*}
because $ (k - 1)^2>k+1$ for $k >200$. Taking logarithms, we get
\begin{align*}
	n + 1 
	&> \frac{(k+1)\log 2 + k \log k - 2\log(k-1)}{\log 2} + \frac{\log(1 - 1/2^{k-1})}{\log 2} \nonumber \\
	&> k + 1 + \frac{k \log k - 2\log(k-1)}{\log 2} - \frac{1}{2^{k-2}\log 2},
\end{align*}
where we have used the fact that $\log(1 - 2^{1-k}) > -2^{2-k}$, which is valid since $0< 2^{1-k} <1/2$ for $k>200$. Therefore,
\begin{align*}
	n 
	&> k  + \frac{k \log k - 2\log(k-1)}{\log 2}-0.1,
\end{align*}
since $1/2^{k-2}\log 2 < 0.1$ for all $k >200$. Hence,
$$
n>k+\frac{(k-2)\log k}{\log 2}-0.1.
$$
Thus, as a result of the previous analysis, we have shown that
\begin{align}\label{bound_n}
	k  + \frac{(k-2)\log k}{\log 2}-0.1<n<k + \frac{(k-2)\log k}{\log 2}+2.3.
\end{align}
We record this as a lemma. 

\begin{lemma}
\label{lem:n}
If $n\ge 0$ and $k\ge 2$ are such that equation \eqref{main} holds, then $k>200$ and 
$$
k  + \frac{(k-2)\log k}{\log 2}-0.1<n<k + \frac{(k-2)\log k}{\log 2}+2.3.
$$
\end{lemma}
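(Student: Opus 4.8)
The plan is to convert equation \eqref{main} into a two-sided estimate for $n$ by sandwiching $L_n^{(k)}$ between powers of $\alpha$ through \eqref{eq2.4}, and then between powers of $2$ through the bounds \eqref{eq2.3} on $\alpha$, after which taking logarithms pins down $n$ up to small, controllable errors. First I would dispose of small $k$. Since $\alpha>1.6$ for every $k\ge 2$ by \eqref{eq2.3}, the chain $\alpha^{n-1}\le L_n^{(k)}=\Delta_k<2^{k+1}k^k/(k-1)^2$ forces $n\le 2529$ whenever $k\le 200$; a direct search over the finite box $2\le k\le 200$, $0\le n\le 2529$ turns up no solution, so from here on we assume $k>200$, which is the first assertion of the lemma.

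For the upper bound on $n$ I would use only the lower half $\alpha^{n-1}\le L_n^{(k)}$ of \eqref{eq2.4}, drop the negative term $-(k+1)^{k+1}$ to obtain $\Delta_k<2^{k+1}k^k/(k-1)^2$, and take logarithms. The essential step is to trade $\log\alpha$ for $\log 2$ at a controlled price: from $\alpha>2(1-2^{-k})$ and $\log(1-2^{-k})>-2^{1-k}$ one gets a lower bound on $\log\alpha$, and the reciprocal $1/\log\alpha$ is then expanded via $1/(1-x)<1+2x$. Once the genuine main term $k+(k-2)\log k/\log 2$ is isolated, the remaining pieces --- the $\log(k-1)$ versus $\log k$ discrepancy, handled through $\log(k-1)>\log k-2/k$, together with several exponentially small terms of size $O(2^{-k})$ --- are bounded for $k>200$ and absorbed into the additive constant $+2.3$.

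For the matching lower bound I would instead invoke the upper half $L_n^{(k)}\le 2\alpha^n$ of \eqref{eq2.4} together with $\alpha<2$, giving $2^{n+1}>\Delta_k$. Writing $\Delta_k=\frac{2^{k+1}k^k}{(k-1)^2}\bigl(1-\frac{(k+1)^{k+1}}{2^{k+1}k^k(k-1)^2}\bigr)$ and estimating the subtracted quantity with $(1+1/k)^k<4$ and $(k-1)^2>k+1$ shows the parenthesis exceeds $1-2^{1-k}$; taking logarithms and using $\log(1-2^{1-k})>-2^{2-k}$ again produces the main term $k+(k-2)\log k/\log 2$ plus an error of size $O(2^{-k})$, which for $k>200$ is swallowed by the constant $-0.1$. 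Putting the two estimates together gives exactly \eqref{bound_n}, which is the remaining assertion of the lemma.

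I expect the only delicate point to be the error bookkeeping. The whole argument hinges on verifying that the explicit numerical thresholds --- for example $\frac{(k+1)\log 2+k\log k}{2^{k-2}(\log 2)^2}<0.1$ and $\frac{1}{2^{k-2}\log 2}<0.1$ --- really do hold for all $k>200$, since it is precisely these checks that license collapsing several distinct correction terms into the single constants $-0.1$ and $+2.3$. Beyond this, everything reduces to routine manipulation of logarithms.
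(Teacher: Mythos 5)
Your proposal is correct and is essentially identical to the paper's own proof: the same preliminary search over $2\le k\le 200$, $0\le n\le 2529$ based on $\alpha>1.6$, the same use of the two halves of \eqref{eq2.4} with $\alpha<2$ and $2(1-2^{-k})<\alpha$ from \eqref{eq2.3}, and the same auxiliary estimates ($1/(1-x)<1+2x$, $\log(k-1)>\log k-2/k$, $(1+1/k)^k<4$, $(k-1)^2>k+1$) with the identical numerical thresholds absorbed into the constants $-0.1$ and $+2.3$. No gaps; nothing further to add.
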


\subsection{Bounding $k$ and $n$}

In this section, we  find absolute bounds on $n$ and $k$. We prove the following result.
\begin{lemma}\label{lem1}
	Let $n\ge 0$ and $k>200$ be integer solutions to \eqref{main}. Then $k<7\cdot 10^{16}$ and $n<4\cdot 10^{18}$.
\end{lemma}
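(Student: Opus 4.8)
The plan is to convert equation \eqref{main} into a linear form in logarithms of the \emph{fixed} integers $2,3,k-1,k$ that is forced to be exponentially small in $k$, and then to play Matveev's lower bound (Theorem \ref{thm:Mat}) against this upper bound.

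First I would exploit the fact that $L_n^{(k)}$ is extremely close to $3\cdot 2^{n-2}$. Using Lemma \ref{lem:n} one checks that $n<2^{k/2}$ for every $k>200$ (indeed $n$ is only of size $k\log k$, which is negligible against $2^{k/2}$), so the sharp estimate \eqref{lk_b1} is available; combining it with \eqref{lk_b} gives $L_n^{(k)}=3\cdot 2^{n-2}(1+\zeta)$ with $|\zeta|<2^{-(n-1)}+36\cdot 2^{-k/2}$. Writing $\Delta_k=\tfrac{2^{k+1}k^k}{(k-1)^2}(1-\delta)$ with $\delta=\tfrac{(k+1)^{k+1}}{2^{k+1}k^k}$ (which is exponentially small), substituting into \eqref{main}, and rearranging, I obtain that
\[
\Gamma:=2^{\,n-k-3}\cdot 3\cdot (k-1)^2\cdot k^{-k}-1
\]
satisfies $\Gamma=\tfrac{-\delta-\zeta}{1+\zeta}$, hence $|\Gamma|<c\cdot 2^{-k/2}$ for an explicit absolute constant $c$, the dominant error being the term $36\cdot 2^{-k/2}$ coming from $\zeta$.

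Before invoking Matveev I would check $\Gamma\neq 0$. If $\Gamma=0$, then $3\cdot 2^{n-2}(k-1)^2=2^{k+1}k^k$; any odd prime $p\ge 5$ dividing $k$ would have to divide the left-hand side, but it is coprime to $3$ and to $k-1$, so $k=2^a3^b$; comparing $3$-adic valuations then forces $b=0$, and with $k=2^a$ the left-hand side equals $2^{n-2}$ times the odd number $3(2^a-1)^2\neq 1$, which cannot be a pure power of $2$. Hence $\Gamma\neq 0$, and I apply Theorem \ref{thm:Mat} with $t=4$, $(\gamma_1,\gamma_2,\gamma_3,\gamma_4)=(2,3,k-1,k)$, exponents $(n-k-3,1,2,-k)$, and $B=n$ (legitimate since $n>\max\{n-k-3,k\}$). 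This yields
\[
\log|\Gamma|>-1.4\cdot 30^{7}\cdot 4^{4.5}(1+\log n)(\log 2)(\log 3)\log(k-1)\log k.
\]
Comparing with the upper bound $\log|\Gamma|<\log c-\tfrac{k}{2}\log 2$ gives $\tfrac{k}{2}\log 2<\log c+1.4\cdot 30^{7}\cdot 4^{4.5}(1+\log n)(\log 2)(\log 3)\log(k-1)\log k$. Using Lemma \ref{lem:n} to replace $\log n$ by $\log k+\log\log k+O(1)$, the right-hand side is of size $(\log k)^3$, so the inequality reduces to $k<(\text{const})\,(\log k)^3$, which I would solve numerically to obtain $k<7\cdot 10^{16}$. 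Feeding this back into Lemma \ref{lem:n} then yields $n<4\cdot 10^{18}$.

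The main obstacle I anticipate is arranging for $\Gamma$ to be genuinely exponentially small: the reduction must keep $k-1$ and $k$ as \emph{separate} bases, since absorbing $(k-1)^2$ into $k^2$ (via $(1-1/k)^2$) would only make $|\Gamma|$ of size $1/k$, which is far too large for Matveev to yield any bound on $k$. Securing the precise numerical value $7\cdot 10^{16}$ likewise demands careful bookkeeping of the constants and of the true size of $B$, rather than the crude leading-order estimate sketched above.
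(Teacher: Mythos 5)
Your proposal is structurally the paper's own proof: the same comparison of the main term $2^{k+1}k^k/(k-1)^2$ of $\Delta_k$ against $3\cdot 2^{n-2}$ via \eqref{lk_b} and \eqref{lk_b1} (made available by $n<2^{k/2}$ from Lemma \ref{lem:n}), the same linear form $\Gamma$ (the paper carries a sign $\varepsilon$ according to which of the two quantities is larger; your exact identity $\Gamma=(-\delta-\zeta)/(1+\zeta)$ sidesteps that case split, which is fine), and the same Matveev-versus-$2^{-k/2}$ comparison closed by $n<3k\log k$. Your nonvanishing argument is in fact a small improvement: the paper reduces $\Gamma=0$ to $k(k-1)=2^a3^b$ and invokes Catalan's equation, while your valuation argument (an odd prime $p\ge 5$ dividing $k$ cannot divide $3\cdot 2^{n-2}(k-1)^2$; then $\nu_3$ kills $b$; then the odd part $3(2^a-1)^2>1$ kills $k=2^a$) is elementary and equally conclusive.

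The one genuine defect is quantitative: you apply Theorem \ref{thm:Mat} with $t=4$ and bases $(2,3,k-1,k)$, whereas the paper takes $t=3$ with the single base $\gamma_1:=3(k-1)^2$, a positive integer greater than $1$ and hence perfectly admissible. This matters because Matveev's constant scales like $30^{t+3}t^{4.5}$: your constant $1.4\cdot 30^{7}\cdot 4^{4.5}(\log 2)(\log 3)\approx 1.2\times 10^{13}$ stands against the paper's effective $\approx 3\times 10^{11}$, a loss of roughly a factor $40$ (the factor $30\cdot(4/3)^{4.5}\approx 110$ from increasing $t$, only partially offset by $(\log 3)\log(k-1)\approx 1.1\log k$ versus $\log(3(k-1)^2)<3\log k$). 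Feeding $n<3k\log k$ into your inequality $\tfrac{k}{2}\log 2<\log c+1.2\times 10^{13}(1+\log n)(\log k)^2$ yields only $k\lesssim 3\times 10^{18}$, not the claimed $k<7\cdot 10^{16}$; no amount of numerical solving recovers the lost factor. Your closing caveat about keeping $k-1$ and $k$ separate is correct for the \emph{formation} of $\Gamma$ (replacing $(k-1)^2$ by $k^2$ would leave an error of size $1/k$, destroying exponential smallness), but it does not force separate Matveev bases: bundling the integer $3(k-1)^2$ into one base is exactly how the paper keeps $t=3$ and secures the stated constants. With that single change your argument reproduces the lemma; as written it proves a strictly weaker bound than the one stated (though one under which the paper's subsequent finite computations would still be feasible, with slightly larger ranges for $m$ and $a$).
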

\begin{proof}
	Since $k>200$, then the upper bound on $n$ in \eqref{bound_n} gives
\begin{align*}
	n<k + \frac{(k-2)\log k}{\log 2}+2.3<2^{k/2}.
\end{align*}
This puts us in position to use inequality \eqref{lk_b1}, which together with \eqref{lk_b} gives
\begin{align*}
	\left|\dfrac{2^{k+1}k^k-(k+1)^{k+1}}{(k-1)^2}-3\cdot2^{n-2}\right| &=\left|L_n^{(k)} - 3\cdot2^{n-2}\right|\\
	&\le \left|L_n^{(k)} - f_k(\alpha)(2\alpha - 1)\alpha^{n - 1}\right|+\left|f_k(\alpha)(2\alpha - 1)\alpha^{n - 1} - 3\cdot2^{n-2}\right|\\
	&< \dfrac{3}{2} + 3\cdot 2^{n-2}\cdot \frac{36}{2^{k/2}}\\
	&< 2 +  2^{n}\cdot \frac{27}{2^{k/2}}\\
	&< 2^{n+1}\cdot \frac{27}{2^{k/2}},
\end{align*}
since by \eqref{bound_n}, $n>k  + (k-2)(\log k)/\log 2-0.1>k/2$, for all $k>200$. Therefore,
\begin{align*}
	\left|\dfrac{2^{k+1}k^k}{(k-1)^2}-3\cdot2^{n-2}\right| &< 2^{n+1}\cdot \frac{27}{2^{k/2}}+\dfrac{(k+1)^{k+1}}{(k-1)^2}.
\end{align*}
Denoting 
\begin{align*}
	A:=\dfrac{2^{k+1}k^k}{(k-1)^2} \qquad \text{and} \qquad B:=3\cdot2^{n-2},
\end{align*}
we then get that for some $\varepsilon\in \{\pm 1\}$ (according to which one is larger between $A$ and $B$), we have
\begin{align*}
	\left|\left((3(k-1)^2)^{-1}\cdot 2^{k-n+3}\cdot k^k \right)^\varepsilon-1\right| &< 2^{n+1}\cdot \frac{27}{2^{k/2}\max\{A,B\}}+\dfrac{(k+1)^{k+1}}{(k-1)^2\max\{A,B\}}\\
	&\le 2^{n+1}\cdot \frac{27}{2^{k/2}\cdot 3\cdot 2^{n-2}}+\dfrac{(k+1)^{k+1}}{(k-1)^2}\cdot \dfrac{(k-1)^2}{2^{k+1}k^k}\\
	&<  \frac{72}{2^{k/2}}+\dfrac{4(k+1)}{2^{k+1}}
	=  \frac{72}{2^{k/2}}+\dfrac{k+1}{2^{k-1}}\\
	&<\frac{72}{2^{k/2}}+\dfrac{1}{2^{k/2}}=\frac{73}{2^{k/2}},
\end{align*}
for all $k>200$. 

Let $\Gamma:=\left((3(k-1)^2)^{-1}\cdot 2^{k-n+3}\cdot k^k\right)^\varepsilon-1$. Then
\begin{align}\label{g1}
	\left|\Gamma\right|<\frac{73}{2^{k/2}}.
\end{align}
Since $k>200$, then there is a prime $p>3$ such that $p\mid k(k-1)$ and has a nonzero exponent in the factorization of
$3^{-1}\cdot 2^{k-n+3}\cdot k^k (k-1)^{-2}$, so $\Gamma\ne 0$. Indeed, for if not, we would have $k(k-1)=2^a\cdot 3^b$. Since $k-1>1$ and $k$ and $k-1$ are coprime, this gives 
$\{k,k-1\}=\{2^a,3^b\}$, so $3^b-2^a=\pm 1$ and the largest solution of this equation is $(a,b)=(3,2)$ by known results on Catalan's equation. This gives $k\le 9$ contradicting the fact that $k>200$. 
We  use Theorem \ref{thm:Mat} on the left-hand side of \eqref{g1}. Let
\begin{equation}\nonumber
	\begin{matrix}
		\gamma_{1}:=3(k-1)^2, & \gamma_{2}:=2, & \gamma_{3}:=k,\\
		b_{1}:=-\varepsilon, & b_{2}:=(k-n+3)\varepsilon, & b_{3}:=k\varepsilon.
	\end{matrix}
\end{equation}
We have $B:=n\ge \max\{|b_1|,|b_2|,|b_3|\}$, by the lower bound in Lemma \ref{lem:n}. Therefore, Theorem \ref{thm:Mat} gives
\begin{align}\label{g2}
	\log |\Gamma|&>-1.4\cdot 30^{3+3}\cdot 3^{4.5} (1+\log n)\cdot \log (3(k-1)^2) \cdot \log 2\cdot \log k.
\end{align}
Comparing \eqref{g1} with \eqref{g2} and using $3(k-1)^2<k^3$ valid for $k>200$, we get
\begin{align}\label{g3}
		(k/2)\log 2 - \log 73 &< 1.4\cdot 30^{3+3}\cdot 3^{4.5} (1+\log n)\cdot (3\log k)\log 2\cdot \log k\nonumber\\
		&<3\cdot 10^{11}\left(1+\dfrac{1}{\log n}\right)\log n \cdot(\log k)^2.
\end{align}
Notice that since $k>200$, then by \eqref{bound_n}, we always have that 
\begin{align*}
	1700<k  + \frac{(k-2)\log k}{\log 2}-0.1<n<k + \frac{(k-2)\log k}{\log 2}+2.3<3k\log k.
\end{align*}
So, using $k>200$ and $1700<n<3k\log k$, then \eqref{g3} becomes
\begin{align*}
	(k/2)\log 2 
	&<3\cdot 10^{11}\left(1+\dfrac{1}{\log 1700}\right)\log (3k\log k) \cdot(\log k)^2+\log 73\\
	&<3.5\cdot 10^{11}(\log k)^2\log(3k\log k).
\end{align*}
The above inequality gives $k<7\cdot 10^{16}$ and by Lemma \ref{lem:n} we get $n<4\cdot 10^{18}$, which is what we wanted.
\end{proof}

We write $n=r+m(k+1)$  with $r\in \{0,1,\ldots,k\}$. Recall that $k>200$ and 
$$
n>k+\frac{(k-2)\log k}{\log 2}-0.1,
$$
so that 
$$
m\ge \frac{(k-2)\log k/\log 2-0.1}{k+1},
$$
therefore $m\ge 9$. Also, since $k<7\times 10^{16}$ by Lemma \ref{lem:n}, we get $m\le 55$. We shall treat separately the cases $r=0$, $r\in \{1,2\}$ and $r\in \{3,\ldots,k\}$. 

\medskip

\subsection{The case $r=0$}

\medskip

In this case $L_n^{(k)}$ is even so $k$ is odd. Using Lemma \ref{lem:2adic} and Lemma \ref{lem:disc}, we get that 
$$
\pm 2\equiv 0\pmod {2^{k-2}},
$$
which is false for $k>4$. When $k=3$, the absolute value of the discriminant of $g_3(X)$ is $44$ and  does not  belong to $\{L_n^{(3)}\}_{n\ge 0}$. 

\medskip

\subsection{The cases $r=1,~2$}

\medskip

In this case $L_n^{(k)}$ is odd so $k$ is even. Reducing the equation \eqref{main} modulo $2^{k-1}$ and using the congruences \eqref{eq:r=1} and \eqref{eq:r=2} in Lemma \ref{lem:2adic} (ii) and (iii) respectively, we get 
$$
-(k+1)^{k+1}/(k-1)^2\equiv (-1)^m  (4m+1),~(-1)^m(4m^2+6m+3)\pmod {2^{k-1}}.
$$
In particular,
$$
2^{k-1}\mid \alpha_1^{b_1}-\alpha_2,
$$
where 
\begin{equation}
\label{eq:ABC}
\alpha_1:=k+1,\quad b_1:=k+1,\quad \alpha_2:=\left\{\begin{matrix} (-1)^m(k-1)^2(4m+1) & {\text{\rm if}} & r=1;\\ (-1)^m(k-1)^2(4m^2+6m+3) & {\text{\rm if}} & r=2.\end{matrix}\right.
\end{equation}
In particular,
$$
k-1\le \nu_2(\alpha_1^{b_1}-\alpha_2).
$$
To find an upper bound on the right--hand side above, we use Lemma \ref{lem:BL}. It is easy to see that $\alpha_1,~\alpha_2$ are multiplicatively independent. 
Indeed, if not, every prime factor of $\alpha_2$ should also be a prime factor of $\alpha_1$. Hence, if $p\mid k-1$ is prime then also $p\mid k+1$. In particular $p=2$, which makes $k-1$ a power of $2$, which is false since $k>200$ is even. 
We take $p:=2,~\log A_1:=\log(k+1)=\log\alpha_1$. Since $m<3k\log k$, it follows that 
$$
4m^2+6m+3\le 4(3k\log k)^2\left(1+\frac{1}{2k\log k}+\frac{3}{4(3k\log k)^2}\right)<36.1(k\log k)^2<k^{3.4}\quad {\text{\rm for}}\quad k> 200. 
$$
Thus, we can take $\log A_2:=5.4\log k>\log \alpha_2$. We take 
$$
b':=\frac{k+6.4}{5.4\log k}>\frac{k+1}{5.4\log k}+\frac{1}{\log(k+1)}=\frac{b_1}{\log A_2}+\frac{b_2}{\log A_1}.
$$
We take $B:=\max\{\log(b'+\log\log 2+0.4,10\log 2\}.$  Note also that $g=1$ since both $\alpha_1$ and $\alpha_2$ are odd. Then Lemma \ref{lem:BL} tells us that 
$$
\nu_2(\alpha_1^{b_1}-\alpha_2)\le \frac{24\cdot 2}{(\log 2)^4} B^2 \log(k+1)\times (5.4\log k)<1123 B^2(\log k)\log(k+1).
$$
We thus get
$$
k-1<1123 B^2 (\log k)\log(k+1).
$$
When $B=10\log 2$, we get  $\log b'+\log\log 2+0.4\le 10\log 2$. This leads to $k<59000$. When 
$$
B=\log b'+\log\log 2+0.4,
$$ 
we get
$$
k-1\le 1123 \left(\log\left(\frac{k+6.4}{5.4\log k}\right)+\log\log 2+0.4\right)^2(\log k)\log(k+1),
$$
which gives $k<7\times 10^7$. Let us record what we have proved. 

\begin{lemma}
\label{lem:ris12}
When $r=1,2$, then $k$ is odd and $200<k<7\times 10^{7}$. Furthermore, 
\begin{equation}
\label{eq:cong}
2^{k-1}\mid \alpha_1^{b_1}-\alpha_2,
\end{equation}
where $\alpha_1,~b_1,~\alpha_2$ are shown in \eqref{eq:ABC}.
\end{lemma}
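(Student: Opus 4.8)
The statement bundles three claims; I treat the parity first, since that is where the printed version needs amending. For $r\in\{1,2\}$ the congruences \eqref{eq:r=1} and \eqref{eq:r=2} exhibit $L_n^{(k)}$ as an odd residue (the integers $4m+1$ and $4m^2+6m+3$ are odd), so $\nu_2(L_n^{(k)})=0$. Equation \eqref{main} reads $L_n^{(k)}=\Delta_k$, hence $\nu_2(\Delta_k)=0$, and the dichotomy of Lemma \ref{lem:disc} then forces $k\equiv 0\pmod 2$. Thus ``$k$ is odd'' in the statement should read ``$k$ is even'': the genuinely odd-$k$ branch is exactly the one already discarded under $r=0$. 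Granting $k$ even, I clear denominators in \eqref{main} to get $(k-1)^2L_n^{(k)}=2^{k+1}k^k-(k+1)^{k+1}$ and reduce modulo $2^{k-1}$; since $2^{k-1}\mid 2^{k+1}k^k$, this leaves $(k-1)^2L_n^{(k)}\equiv -(k+1)^{k+1}\pmod{2^{k-1}}$. As $k-1$ is odd, $(k-1)^2$ is a unit modulo $2^{k-1}$, so substituting the relevant congruence for $L_n^{(k)}$ from Lemma \ref{lem:2adic} and rearranging yields exactly \eqref{eq:cong}, namely $2^{k-1}\mid\alpha_1^{b_1}-\alpha_2$ with $\alpha_1=k+1$, $b_1=k+1$ and $\alpha_2$ as in \eqref{eq:ABC}; in particular $k-1\le\nu_2(\alpha_1^{b_1}-\alpha_2)$.

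To bound $k$ I cap this valuation from above by Lemma \ref{lem:BL} with $p=2$. First, $\alpha_1$ and $\alpha_2$ are multiplicatively independent: otherwise every prime dividing $\alpha_2$ would divide $\alpha_1=k+1$, so every prime dividing $k-1$ would divide $k+1$; as $\gcd(k-1,k+1)\mid 2$ that prime is $2$, forcing the odd number $k-1$ to be a power of $2$, i.e.\ $k-1=1$, against $k>200$. For the parameters I take $\log A_1=\log(k+1)$ and, using $m<3k\log k$ so that $|\alpha_2|\le(k-1)^2(4m^2+6m+3)<k^{5.4}$, take $\log A_2=5.4\log k$; both $\alpha_i$ are odd, so $g=1$. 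With $b'=\frac{k+6.4}{5.4\log k}$ and the associated $B$, Lemma \ref{lem:BL} gives $\nu_2(\alpha_1^{b_1}-\alpha_2)<1123\,B^2(\log k)\log(k+1)$, whence $k-1<1123\,B^2(\log k)\log(k+1)$.

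It remains to solve this inequality in the two regimes for $B$. When $B$ equals its constant floor the bound is elementary and gives $k<59000$; when $B=\log b'+\log\log 2+0.4$ it becomes the transcendental inequality $k-1\le 1123\left(\log\left(\frac{k+6.4}{5.4\log k}\right)+\log\log 2+0.4\right)^2(\log k)\log(k+1)$, which a routine monotonicity and numerical check resolves as $k<7\times10^7$. Together with $k>200$ this is the asserted range. The main obstacle I anticipate is bookkeeping rather than insight: fitting the data to the exact hypotheses of Lemma \ref{lem:BL}---the constraints $\log A_i\ge\max\{\log|\alpha_i|,\log 2\}$, the prescribed shape of $b'$, and the positivity of $\alpha_2$, which forces one to track the sign $(-1)^m$ so that $\nu_2(\alpha_1^{b_1}-\alpha_2)$ is genuinely the quantity the lemma controls---and then extracting the explicit constant $7\times10^7$ from the transcendental inequality. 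The conceptual point, that a valuation forced to exceed $k-1$ cannot be held below an expression of size $(\log k)^3$ once $k$ is large, is immediate.
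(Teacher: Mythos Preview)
Your argument is correct and matches the paper's own proof essentially step for step: the parity of $L_n^{(k)}$ via Lemma~\ref{lem:2adic}, the reduction of \eqref{main} modulo $2^{k-1}$ to obtain \eqref{eq:cong}, the multiplicative-independence check on $\alpha_1,\alpha_2$, and the application of Lemma~\ref{lem:BL} with $\log A_1=\log(k+1)$, $\log A_2=5.4\log k$, $b'=(k+6.4)/(5.4\log k)$, leading to $k-1<1123\,B^2(\log k)\log(k+1)$ and hence $k<7\times 10^7$. You are also right that the clause ``$k$ is odd'' in the lemma is a slip for ``$k$ is even''; the paper's surrounding text says exactly this (``In this case $L_n^{(k)}$ is odd so $k$ is even''), and the independence argument itself relies on $k-1$ being odd.
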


A short computation finished the proof of this case. Namely, for each even $k$ in the interval $(200, 70000000)$, we looked at the positive integers $n$  in the interval 
$$
(k+(k-2)\log k/\log 2-0.1, k+(k-2)\log k/\log 2+2.3)
$$ 
(Lemma \ref{lem:n}), and tested whether there is such an integer $n$ in the above interval which is congruent to 
$r\in \{1,2\}$ modulo $k+1$. If such an $n$ passed this test, we recorded the pair $(k,n)$ in some list denoted by ${\mathcal A}$. The code ran for a few minutes and produced a list  ${\mathcal A}$ of $32$ pairs $(k,n)$. 
For each of these $32$ pairs we computed $m:=\lfloor n/(k+1)\rfloor$ and tested whether
$$
(k+1)^{k+1}-\alpha_2
$$
was a multiple of $2^{100}$, where $\alpha_2:=(-1)^m (k-1)^2(4m+1)$ or $\alpha_2:=(-1)^m(k-1)^2(4m^2+6m+3)$ according to whether $r=1$ or $2$, respectively (see \eqref{eq:cong}). None of the $32$ pairs passed this last test, see Appendix \ref{app1}. We also used the \texttt{pow(k+1,k+1,mod)} to handle large numbers of the form $(k+1)^{k+1}$.
This shows that there are no solutions with $r\equiv 1,2\pmod {k+1}$. 

\medskip 

\subsection{The case $r\in \{3,\ldots,k\}$}

\medskip

In this case $L_n^{(k)}$ is even so $k$ is odd. We take a look at the congruence \eqref{eq:rge3} of Lemma \ref{lem:2adic} (iv). Recall that
$$
L_n^{(k)}\equiv 2^{r-2} (-1)^m L(m,r)\pmod {2^{k+r-2}},
$$
where
$$
L(m,r):=4\left(\binom{m+r+1}{m}-\binom{m+r-1}{m-2}\right)-\left(\binom{m+r}{m}-\binom{m+r-2}{m-2}\right).
$$
Note that 
\begin{eqnarray*}
L(m,r) & = & \frac{4(m+r+1)!}{m!(r+1)!}-\frac{4(m+r-1)!}{(m-2)!(r+1)!}-\frac{(m+r)!}{m! r!}+\frac{(m+r-2)!}{(m-2)! r!}\\ 
& = & \frac{(m+r-2)!}{(m-2)! r!}\left(\frac{4(m+r+1)(m+r)(m+r-1)}{m(m-1)(r+1)}-\frac{4(m+r-1)}{r+1}-\frac{(m+r)(m+r-1)}{m(m-1)}+1\right)\\
& = & \frac{1}{m(m-1)(r+1)} \binom{m+r-2}{m-2}\left(3r^3+10mr^2+8m^2r+2mr-3r+8m^2-8m\right).
\end{eqnarray*}
Thus,
$$
\nu_2(L(m,r))\le \nu_2\left(\binom{m+r-2}{m-2}\right)+\nu_2(3r^3+10mr^2+8m^2r+2mr-3r+8m^2-8m).
$$
Note that 
$$m(k+1)<r+m(k+1)=n<k+\frac{(k-2)\log k}{\log 2}+2.3<3k\log k,
$$
therefore $m<3\log k$. Also, $r\le k$. Thus,
\begin{eqnarray*}
&& 3r^3+10mr^2+8m^2r+2mr-3r+8m^2-8m\\
& \le &  3k^3+10k^2(3\log k)+8k(3\log k)^2+2k(3\log k)+8(3\log k)^2\\
& \le & 3k^3\left(1+\frac{10\log k}{k}+\frac{24(\log k)^2}{k^2}+\frac{2\log k}{k^2}+\frac{24 (\log k)^2}{k^3}\right)\\
& < & 4k^3.
\end{eqnarray*}
The above bounds hold since $k>200$. Thus, 
$$
\nu_2(L(m,r))\le \nu_2\left(\binom{m+r-2}{m-2}\right)+\log_2(4k^3)\le \log_2(m+r-1)+\log_2(4k^3),
$$
where in the above we applied Kummer's theorem (see \cite{kum}) to bound the exponent of $2$ in the binomial coefficient. Since $m+r-1<k+3\log k-1<2k$ for $k>200$, we get that 
$$
\nu_2(L(m,r))\le \log_2(2k)+\log_2(4k^3)=3+4\log_2(k)=3+\frac{4\log k}{\log 2}<6\log k+2,
$$
since $k>200$. Thus, if we write 
\begin{equation}
\label{eq:Lmr}
L(m,k)=2^a\cdot b,
\end{equation}
where $a\ge 0$ and $b$ is odd, then $0\le a\le 6\log k+2$. 
Thus, we get that 
$$
L_n^{(k)}\equiv 2^{r-1} L(m,k)\pmod {2^{r+k-2}}\equiv 2^{r-1+a} b\pmod {2^{r+k-2}}.
$$
Note that 
$$
r-1+a<r+6\log k+1<r+k-2,
$$
where the last inequality is equivalent to $k>6\log k+3$ which holds for us since $k>200$. Thus, the above argument shows that 
$$
\nu_2(L_n^{(k)})=r-1+a.
$$
Comparing this with Lemma \ref{lem:disc}, we get that
$$
r-1+a=\nu_2(L_n^{(k)})=\nu_2({\text{\rm Disc}}(g_k(X)))=k-1,
$$
so $r=k-(a-1)$. Since $r\le k$, we get that $a-1\ge 0$. Furthermore, since $a\le 6\log k+2$ and $k<7\times 10^{16}$, we get that $a-1\le 233$. 

We record what we have proved. 

\begin{lemma}
In case $r\in \{3,4,\ldots,k\}$, then $k$ is odd and $r=k-(a-1)$, where $a-1\in [0,233]$.  
\end{lemma}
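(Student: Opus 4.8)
\emph{The plan.} The aim is to read off the exact $2$-adic valuation of $L_n^{(k)}$ from the congruence \eqref{eq:rge3} of Lemma \ref{lem:2adic} and to match it against the value $\nu_2(\Delta_k)=k-1$ supplied by Lemma \ref{lem:disc}. First I would settle the parity of $k$. Since $r\ge 3$, the factor $2^{r-2}$ in \eqref{eq:rge3} forces $L_n^{(k)}$ to be even, so if \eqref{main} holds then $\Delta_k$ is even; by Lemma \ref{lem:disc} this happens precisely when $k$ is odd, and in that case $\nu_2(\Delta_k)=k-1$, which becomes the target value.

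The heart of the matter is to pin down $\nu_2(L_n^{(k)})$ exactly. Writing $L_n^{(k)}\equiv 2^{r-2}(-1)^m L(m,r)\pmod{2^{k+r-2}}$, I would first collapse the four binomials in $L(m,r)$ into the closed form
$$
L(m,r)=\frac{1}{m(m-1)(r+1)}\binom{m+r-2}{m-2}\bigl(3r^3+10mr^2+8m^2r+2mr-3r+8m^2-8m\bigr),
$$
and then bound its $2$-adic valuation. Since $m(m-1)(r+1)$ is a positive integer, its $2$-adic valuation is nonnegative, so dropping the denominator gives $\nu_2(L(m,r))\le \nu_2\binom{m+r-2}{m-2}+\nu_2(P)$, where $P$ denotes the cubic factor. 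For $P$ I would use the crude size estimate $P<4k^3$, available because $n<3k\log k$ forces $m<3\log k$ while $r\le k$; for the binomial I would invoke Kummer's theorem to get $\nu_2\binom{m+r-2}{m-2}\le\log_2(m+r-1)$. Since $m+r-1<2k$, these combine to give $\nu_2(L(m,r))\le 3+4\log_2 k<6\log k+2$. Writing $L(m,r)=2^a b$ with $b$ odd, this caps $a$ by $6\log k+2$.

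The crucial observation is that this bound is only logarithmic in $k$, so $r-2+a<k+r-2$; that is, the power of $2$ exposed by the congruence lies strictly below the modulus, and hence the congruence computes the valuation exactly, $\nu_2(L_n^{(k)})=r-2+a$. Equating with $\nu_2(\Delta_k)=k-1$ gives $r-2+a=k-1$, i.e. $r=k-(a-1)$. Finally, $r\le k$ forces $a-1\ge 0$, while $a\le 6\log k+2$ combined with the bound $k<7\times 10^{16}$ from Lemma \ref{lem1} forces $a-1\le 233$; hence $a-1\in[0,233]$.

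I expect the main obstacle to be the valuation control rather than the closing bookkeeping: one must be certain that the factor $2^{r-2}L(m,r)$ isolated by \eqref{eq:rge3} genuinely carries the full power of $2$ in $L_n^{(k)}$, and is neither cancelled nor augmented by the part of the sum discarded modulo $2^{k+r-2}$. The logarithmic bound on $a$ is exactly what rules out such interference, and the cleanest route to it is the Kummer estimate on the binomial coefficient. The preliminary algebraic collapse of the four binomials into a single binomial times a cubic is routine, but it is the step where an index or sign slip would be easiest to make, so I would verify the numerator expansion with care.
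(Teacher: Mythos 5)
Your proposal matches the paper's argument essentially step for step: the same parity deduction from the factor $2^{r-2}$ and Lemma \ref{lem:disc}, the same closed form for $L(m,r)$ as $\frac{1}{m(m-1)(r+1)}\binom{m+r-2}{m-2}$ times the cubic, the same Kummer-plus-size bounds giving $a\le 6\log k+2$, the same observation that $r-2+a<k+r-2$ makes the congruence compute $\nu_2(L_n^{(k)})$ exactly, and the same comparison with $\nu_2(\Delta_k)=k-1$ together with $k<7\times 10^{16}$ to conclude $a-1\in[0,233]$. In fact your bookkeeping $\nu_2(L_n^{(k)})=r-2+a$ is the version consistent with the lemma's conclusion $r=k-(a-1)$, whereas the paper's own display contains an off-by-one slip (it writes $2^{r-1}L(m,k)$ and $r-1+a=k-1$, which would instead give $r=k-a$), so your blind reconstruction is, if anything, the cleaner one.
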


So, now we know that $n=r+m(k+1)$, where $r=k-(a-1)$, $0\le a-1\le 6\log k+2$ and $m<3\log k$. We next exploit relation \eqref{g1}. Writing it in logarithmic form we get
$$
|k\log k-(n-k-3)\log 2-\log(3(k-1)^2)|<\frac{146}{2^{k/2}}.
$$
We rework the left--hand side above as
\begin{eqnarray*}
k\log k-(n-k-3)\log 2-\log(3(k-1)^2) & = & k\log k-(k-(a-1)+m(k+1)-k-3)\log 2-\log(3(k-1)^2)\\
& = & k\log(k/2^{m})-(m-a-2)\log 2-\log(3(k-1)^2).
\end{eqnarray*}
So, we get that 
$$
|k\log(k/2^{m})|<\frac{146}{2^{k/2}}+|m-a-2|\log 2+\log(3(k-1)^2).
$$
The right--hand side is at most
$$
\frac{146}{2^{k/2}}+\max\{m,a+2-m\}\log 2+\log(3(k-1)^2)\le \frac{146}{2^{k/2}}+(6\log k+4-9)\log 2+\log 3+2\log k<7\log k.
$$
In the above, we used that $9\le m\le 3\log k<6\log k+4-9$ and $a<6\log k+2$. Hence,
$$
\left|\log(2^{m}/k)\right|<\frac{7\log k}{k},
$$ 
which implies that 
$$
\exp\left(-\frac{7\log k}{k}\right)<\frac{2^{m}}{k}<\exp\left(\frac{7\log k}{k}\right).
$$
Let 
$$
\zeta:=\frac{7\log k}{k}.
$$
We have $\zeta<0.2$ since $k>200$ and the function $(7\log x)/x$ is decreasing for $x>200$. Hence,
$$
\exp(\zeta)=1+\zeta+\eta,
$$
where 
$$
\eta=\frac{\zeta^2}{2!}+\frac{\zeta^3}{3!}+\cdots. 
$$
Thus, 
$$
|\eta|\le \frac{|\zeta|^2}{2} (1+|\zeta|+\cdots+)=\frac{\zeta^2}{2(1-|\zeta|)}<\frac{\zeta^2}{1.6}=\frac{(7\log k)^2}{1.6k^2}.
$$
So, we get that 
$$
\exp(\zeta) <1+\frac{7\log k}{k}+\frac{(7\log k)^2}{1.6k^2}\qquad {\text{\rm and}}\qquad \exp(-\zeta)>1-\frac{7\log k}{k}-\frac{(7\log k)^2}{1.6k^2}.
$$
Hence,
$$
1-\frac{7\log k}{k}-\frac{(7\log k)^2}{1.6k^2}<\frac{2^{m}}{k}<1+\frac{7\log k}{k}-\frac{(7\log k)^2}{1.6k^2},
$$
which is equivalent to 
$$
k-(7\log k)-\frac{(7\log k)^2}{1.6k}<2^{m}<k+(7\log k)+\frac{(7\log k)^2}{1.6k}.
$$
Since $k>200$, we have that 
$$
\frac{(7\log k)^2}{1.6 k}<5,
$$
and since $k<7\times 10^{16}$, we also have that $7\log k<274$. Therefore,
$$
|k-2^{m}|<300,
$$
which leads to $k\in (2^{m}-300,2^{m}+300)$. In conclusion, we must have 
$$
n=r-(a-1)+m(k+1),\quad {\text{\rm where}}\quad a-1\in [0,233],\quad m\in [9,55],\quad k\in (2^{m}-300,2^{m}+300),
$$
and $k$ is odd. Thus, there are at most  $234\times 48\times 300<3.5\times 10^6$  ways to fix the triple $(a,m,k)$. This fixes the pair $(k,n)$ in at most $3.5\times 10^6$ ways. 
For each of these we test, by the definition of $a$, whether
$$
\nu_2(L(m,r))=a,
$$
where $L(m,r)$ is given by formula \eqref{eq:Lmr}. If the pair $(k,n)$ passes this test we still use Lemma \ref{lem:2adic} (iv) to check whether
$$
L_n^{(k)}\equiv 2^{r-2} (-1)^m L(m,r)\pmod {2^{r+k-2}} \equiv \frac{2^{k+1}k^k-(k+1)^{k+1}}{(k-1)^2}\pmod {2^{r+k-2}}.
$$
This is implied by 
$$
(-1)^m (k-1)^2 L(m,r)- (2^{k+3-r} k^k-2^{k+3-r}((k+1)/2)^{k+1} )\equiv 0\pmod {2^{k}}.
$$
Since $k+3-r=a+2$ and since $k>(\log 6k+4)+150\ge a+150$ for $k>200$, we checked the above congruence just modulo $2^{a+150}$. In a matter of seconds this was checked and no solution was found. 
Interesting enough when we only checked modulo $2^{a+100}$, then $14$ pairs $(k,n)$ were found. The theorem is therefore proved.

\section*{Acknowledgments} 
The first author thanks the Mathematics division of Stellenbosch University for funding his PhD studies.

\section*{Addresses}

$ ^{1} $ Mathematics Division, Stellenbosch University, Stellenbosch, South Africa.

Email: \url{hbatte91@gmail.com}

Email: \url{fluca@sun.ac.za}

\appendix
\section{Appendix 1}\label{app1}
\begin{verbatim}
import math
def search_and_print_pairs_with_r(start_k=202, end_k=70000000):
"""Search for all (k, n) pairs with n \equiv 1 or 2 mod (k+1), and print (k, n, r)."""
        results = []
        log2 = math.log(2)
	
        for k in range(start_k, end_k + 1, 2):  # even k only
            logk = math.log(k)
            f_k = k + ((k - 2) * logk / log2) - 0.1
            g_k = k + ((k - 2) * logk / log2) + 2.3
        
            # Only include integers strictly inside the open interval (f(k), g(k))
            lower = int(math.floor(f_k)) + 1
            upper = int(math.floor(g_k))
	
           for n in range(lower, upper + 1):
              r = n % (k + 1)
              if r in {1, 2}:
                 results.append((k, n, r))
                 print(f"(k={k}, n={n}, r={r})")  # Print immediately
	
         print(f"\nTotal number of (k, n, r) triples found: {len(results)}")
         return results
	
def test_power_of_two_congruence(triples):
"""Tests whether (k+1)^{k+1} - alpha_2 \equiv 0 mod 2^100 for each (k, n, r) triple."""
    results = []
    mod = 2**100
	
    for k, n, r in triples:
         m = n // (k + 1)
	     if r == 1:
             alpha2 = (-1)*m * (k - 1)*2 * (4 * m + 1)
         elif r == 2:
             alpha2 = (-1)*m * (k - 1)2 * (4 * m*2 + 6 * m + 1)
         else:
             continue  # Invalid r (shouldn't happen)
	
         lhs = pow(k + 1, k + 1, mod)
         alpha2_mod =alpha2 % mod
         if (lhs -alpha2_mod) % mod == 0:
             results.append((k, n, r, m))
             print(f"(k={k}, n={n}, r={r}, m={m}) passes the test.")
	print(f"\nTotal number of valid (k, n, r, m) quadruples: {len(results)}")
    return results
	
# --- Run both steps ---
A = search_and_print_pairs_with_r(200, 70000000)  # Step 1: Generate triples
valid_quads = test_power_of_two_congruence(A)     # Step 2: Test congruence
\end{verbatim}

\end{document}